\providecommand{\algorithmname}{Algorithm}
\theoremstyle{plain}
\newtheorem{thm}{\protect\theoremname}[section]
\theoremstyle{definition}
\newtheorem{defn}[thm]{\protect\definitionname}
\theoremstyle{plain}
\newtheorem{lem}[thm]{\protect\lemmaname}
\theoremstyle{plain}
\newtheorem{cor}[thm]{\protect\corollaryname}
\providecommand{\corollaryname}{Corollary}
\providecommand{\definitionname}{Definition}
\providecommand{\lemmaname}{Lemma}
\providecommand{\theoremname}{Theorem}
\begin{document}
\global\long\def\E{\mathbb{E}}%
\global\long\def\Var{\mathrm{Var}}%
\global\long\def\Cov{\mathrm{Cov}}%
\global\long\def\R{\mathbb{R}}%
\global\long\def\F{\mathcal{F}}%
\global\long\def\dom{\mathcal{X}}%
\global\long\def\breg{\mathbf{D}_{\psi}}%

\title{High Probability Convergence for Accelerated Stochastic Mirror Descent}
\author{Alina Ene\thanks{Department of Computer Science, Boston University. ${\tt aene@bu.edu}$}
\and Huy L. Nguyen\thanks{Khoury College of Computer and Information Science, Northeastern University.
${\tt hu.nguyen@northeastern.edu}$}}
\maketitle
\begin{abstract}
In this work, we describe a generic approach to show convergence with
high probability for stochastic convex optimization. In previous works,
either the convergence is only in expectation or the bound depends
on the diameter of the domain. Instead, we show high probability convergence
with bounds depending on the initial distance to the optimal solution
as opposed to the domain diameter. The algorithms use step sizes analogous
to the standard settings and are universal to Lipschitz functions,
smooth functions, and their linear combinations.
\end{abstract}

\section{Introduction}

Stochastic convex optimization is a well-studied area with numerous
applications in algorithms, machine learning, and beyond. Various
algorithms have been shown to converge for many classes of functions
including Lipschitz functions, smooth functions, and their linear
combinations. However, one curious gap remains in the understanding
of their convergence with high probability compared with convergence
in expectation. Classical results show that in expectation, the function
value gap of the final solution is proportional to the distance between
the original solution and the optimal solution. On the other hand,
classical results for convergence with high probability could only
show that the function value gap of the final solution is proportional
to the diameter of the domain, which could be much larger or even
unbounded. In this work, we bridge this gap and establish a generic
approach to show convergence with high probability where the final
function value gap is proportional to the distance between the original
solution and the optimal solution. We instantiate our approach in
two settings, stochastic mirror descent and stochastic accelerated
gradient descent. The results are analogous to known results for convergence
in expectation but now with high probability. The algorithms are universal
for both Lipschitz functions and smooth functions.

The proof technique is inspired by classical works in concentration
inequalities, specifically a type of martingale inequalities where
the variance of the martingale difference is bounded by a linear function
of the previous value. This technique is first applied to showing
high probability convergence by Harvey et al. \cite{harvey2019tight}.
Our proof is inspired by the proof of Theorem 7.3 by Chung and Lu
\cite{chung2006concentration}. In each time step with iterate $x_{t}$,
let $\xi_{t}:=\widehat{\nabla}f\left(x_{t}\right)-\nabla f\left(x_{t}\right)$
be the error in our gradient estimate. Classical proofs of convergence
evolve around analyzing the sum of $\left\langle \xi_{t},x^{*}-x_{t}\right\rangle $,
which can be viewed as a martingale sequence. Assuming a bounded domain,
the concentration of the sum can be shown via classical martingale
inequalities. The key new insight is that instead of analyzing this
sum, we analyze a related sum where the coefficients decrease over
time to account for the fact that we have a looser grip on the distance
to the optimal solution as time increases. Nonetheless, the coefficients
are kept within a constant factor of each others and the same asymptotic
convergence is attained with high probability.

\paragraph{Related work}

Lan \cite{lan2020first} establishes high probability bounds for the
general setting of stochastic mirror descent and accelerated stochastic
mirror descent under the assumption that the stochastic noise is subgaussian.
The rates shown in \cite{lan2020first} match the best rates known
in expectation, but they depend on the Bregman diameter $\max_{x,y\in\dom}\breg\left(x,y\right)$
of the domain, which can be unbounded. Our work complements the analysis
of \cite{lan2020first} with a novel concentration argument that allows
us to establish convergence with respect to the distance $\breg\left(x^{*},x_{1}\right)$
from the initial point. Our analysis applies to the general setting
considered in \cite{lan2020first} and we use the same subgraussian
assumption on the stochastic noise.

The algorithms and step sizes we consider capture the stochastic gradient
descent algorithms with the standard setting of the step sizes for
both smooth and non-smooth problems. The high-probability convergence
of SGD is studied in the works \cite{kakade2008generalization,rakhlin2011making,hazan2014beyond,harvey2019tight}.
These works either assume that the function is strongly convex or
the domain is compact. In contrast, our work applies to non-strongly
convex optimization with a general domain.

\section{Preliminaries}

We consider the problem $\min_{x\in\dom}f(x)$ where $f\colon\R^{d}\to\R$
is a convex function and $\dom\subseteq\R^{d}$ is a convex domain.
We consider the general setting where $f$ is potentially not strongly
convex and the domain $\dom$ is not necessarily compact.

We assume we have access to a stochastic gradient oracle that returns
a stochastic gradient $\widehat{\nabla}f(x)$ that satisfies the following
two assumptions for any prior history:
\begin{enumerate}
\item \textbf{Unbiased estimator:} $\E\left[\widehat{\nabla}f\left(x\right)\vert x\right]=\nabla f\left(x\right)$.
\item \textbf{Sub-Gaussian noise:} $\left\Vert \widehat{\nabla}f\left(x\right)-\nabla f\left(x\right)\right\Vert $
is a $\sigma$-subgaussian random variable (Definition \ref{def:subgaussian-random-variable}).
\end{enumerate}
There are several equivalent definitions of subgaussian random variables
up to an absolute constant scaling (see, e.g., Proposition 2.5.2 in
\cite{vershynin2018high}). For convenience, we use the following
property as the definition. 
\begin{defn}
\label{def:subgaussian-random-variable}A random variable $X$ is
$\sigma$-subgaussian if
\[
\E\left[\exp\left(\lambda^{2}X^{2}\right)\right]\leq\exp\left(\lambda^{2}\sigma^{2}\right)\text{ for all }\lambda\text{ such that }\left|\lambda\right|\leq\frac{1}{\sigma}
\]
\end{defn}

The above definition is equivalent to the following property, see
Proposition 2.5.2 in \cite{vershynin2018high}.
\begin{lem}
(Proposition 2.5.2 in \cite{vershynin2018high}) \label{lem:subgaussian-properties}Let
$X$ be a $\sigma$-subgaussian random variables. Then

\[
\E\left[\exp\left(\frac{X^{2}}{\sigma^{2}}\right)\right]\leq\exp\left(1\right)
\]
\end{lem}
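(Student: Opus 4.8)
The plan is to observe that this statement is essentially immediate from Definition \ref{def:subgaussian-random-variable}: it is just the defining inequality evaluated at the endpoint of the allowed range of $\lambda$. Concretely, I would set $\lambda = 1/\sigma$. Since the definition requires the bound $\E\left[\exp\left(\lambda^{2}X^{2}\right)\right]\leq\exp\left(\lambda^{2}\sigma^{2}\right)$ to hold for all $\lambda$ with $\left|\lambda\right|\leq 1/\sigma$, and $\left|1/\sigma\right| = 1/\sigma$ satisfies this constraint with equality, the bound applies at $\lambda = 1/\sigma$.

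Substituting this choice, the left-hand side becomes $\E\left[\exp\left(\left(1/\sigma\right)^{2}X^{2}\right)\right] = \E\left[\exp\left(X^{2}/\sigma^{2}\right)\right]$, which is exactly the quantity we wish to bound. The right-hand side becomes $\exp\left(\left(1/\sigma\right)^{2}\sigma^{2}\right) = \exp\left(1\right)$. Combining these gives $\E\left[\exp\left(X^{2}/\sigma^{2}\right)\right] \leq \exp\left(1\right)$, which is the claim.

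There is essentially no obstacle here; the only point requiring the slightest care is that the defining inequality is stated with the non-strict bound $\left|\lambda\right|\leq 1/\sigma$, so the endpoint value $\lambda = 1/\sigma$ is genuinely admissible and no limiting argument is needed. (Had the definition used a strict inequality $\left|\lambda\right| < 1/\sigma$, one would instead take $\lambda \to 1/\sigma$ from below and invoke the monotone convergence theorem, since $\lambda \mapsto \exp\left(\lambda^{2}X^{2}\right)$ is nondecreasing in $\left|\lambda\right|$ pointwise; but with the stated definition this is unnecessary.) I expect the author's proof to be a single line specializing $\lambda = 1/\sigma$.
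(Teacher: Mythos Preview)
Your argument is correct: the lemma follows immediately from Definition~\ref{def:subgaussian-random-variable} by specializing $\lambda = 1/\sigma$, exactly as you describe. The paper does not supply its own proof of this lemma at all; it simply records the statement as a citation to Proposition~2.5.2 of \cite{vershynin2018high}, so there is nothing further to compare.
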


We will also use the following helper lemma whose proof we defer to
the Appendix.
\begin{lem}
\label{lem:helper-taylor}For any $a\ge0$, $0\le b\le\frac{1}{2\sigma}$
and a nonnegative $\sigma$-subgaussian random variable $X$,

\[
\E\left[1+b^{2}X^{2}+\sum_{i=2}^{\infty}\frac{1}{i!}\left(aX+b^{2}X^{2}\right)^{i}\right]\le\exp\left(3\left(a^{2}+b^{2}\right)\sigma^{2}\right)
\]
\end{lem}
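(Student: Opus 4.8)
The plan is to rewrite the left‑hand side in closed form, reduce the whole statement to a single elementary two‑variable inequality, and then let the subgaussian hypothesis finish the job after a short case split. First observe that the bracketed quantity is exactly $\exp(aX+b^{2}X^{2})-aX$: since $\exp(aX+b^{2}X^{2})=1+(aX+b^{2}X^{2})+\sum_{i\ge2}\frac{1}{i!}(aX+b^{2}X^{2})^{i}$, removing the linear term $aX$ leaves precisely $1+b^{2}X^{2}+\sum_{i\ge2}\frac{1}{i!}(aX+b^{2}X^{2})^{i}$. So the goal is $\E\!\left[\exp(aX+b^{2}X^{2})-aX\right]\le\exp\!\bigl(3(a^{2}+b^{2})\sigma^{2}\bigr)$.

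The core is the deterministic inequality $e^{p+q}-p\le e^{2(p^{2}+q)}$ for all $p,q\ge0$, which I would prove by fixing $p$ and setting $F(q)=e^{2(p^{2}+q)}-e^{p+q}+p$. Then $F'(q)=2e^{2(p^{2}+q)}-e^{p+q}\ge0$, because this amounts to $e^{p-2p^{2}-q}\le2$ and $p-2p^{2}-q\le\max_{p\ge0}(p-2p^{2})=\tfrac18<\ln2$; hence $F(q)\ge F(0)=e^{2p^{2}}-e^{p}+p=:G(p)$. Now $G(0)=G'(0)=0$ and $G''(p)=(4+16p^{2})e^{2p^{2}}-e^{p}\ge4e^{2p^{2}}-e^{p}>0$ for every real $p$ (the quadratic $2p^{2}-p+\ln4$ has negative discriminant), so $G$ is convex with a double zero at $0$ and $G\ge0$ on $[0,\infty)$; thus $F(q)\ge0$. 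Substituting $p=aX\ge0$, $q=b^{2}X^{2}\ge0$ — for which $p+q=aX+b^{2}X^{2}$ and $p^{2}+q=(a^{2}+b^{2})X^{2}$ — gives the pointwise bound
\[
\exp(aX+b^{2}X^{2})-aX\le\exp\!\bigl(2(a^{2}+b^{2})X^{2}\bigr).
\]
The crucial features are that the $-aX$ is retained and that the exponent on the right is a multiple of $X^{2}$, not $X^{4}$ (which would be uncontrollable under a subgaussian assumption).

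Taking expectations, I would split on the size of $(a^{2}+b^{2})\sigma^{2}$. If $(a^{2}+b^{2})\sigma^{2}\le\tfrac12$, then $2(a^{2}+b^{2})\le\sigma^{-2}$, so Definition~\ref{def:subgaussian-random-variable} applies with $\lambda^{2}=2(a^{2}+b^{2})$ and gives $\E[\exp(2(a^{2}+b^{2})X^{2})]\le e^{2(a^{2}+b^{2})\sigma^{2}}\le e^{3(a^{2}+b^{2})\sigma^{2}}$. If instead $(a^{2}+b^{2})\sigma^{2}>\tfrac12$, I would bound more crudely: since $b^{2}\sigma^{2}\le\tfrac14$, taking $s=\tfrac{2}{\sigma^{2}}-2b^{2}>0$ in $aX\le\tfrac{a^{2}}{2s}+\tfrac{s}{2}X^{2}$ gives $aX+b^{2}X^{2}\le\tfrac{a^{2}\sigma^{2}}{3}+\tfrac{X^{2}}{\sigma^{2}}$, so by Lemma~\ref{lem:subgaussian-properties},
\[
\E\!\left[\exp(aX+b^{2}X^{2})-aX\right]\le\E\!\left[\exp(aX+b^{2}X^{2})\right]\le e^{a^{2}\sigma^{2}/3}\,\E\!\left[\exp(X^{2}/\sigma^{2})\right]\le e^{a^{2}\sigma^{2}/3+1},
\]
and $\tfrac{a^{2}\sigma^{2}}{3}+1\le 3(a^{2}+b^{2})\sigma^{2}$ since $\tfrac{8}{3}a^{2}\sigma^{2}+3b^{2}\sigma^{2}\ge\tfrac{8}{3}(a^{2}+b^{2})\sigma^{2}>\tfrac43>1$.

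The real difficulty is not this last estimate but seeing why one cannot take the crude route throughout: dropping the $-aX$ and bounding $\E[\exp(aX+b^{2}X^{2})]$ always picks up a spurious multiplicative constant (essentially the $e$ in $\E[\exp(X^{2}/\sigma^{2})]\le e$), which already violates the claim as $a,b\to0$, where both sides tend to $1$. Forcing the constant to be exactly $3$ requires retaining the cancellation of the linear term, which the pointwise inequality above does; the case split is needed only so the subgaussian definition is invoked at an admissible $\lambda$, and it is far from tight (any threshold in $[\tfrac38,\tfrac12]$ works). I expect the technical core to be the verification of the two‑variable inequality in the second paragraph.
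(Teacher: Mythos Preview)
Your proof is correct and genuinely different from the paper's. Both arguments begin by recognizing the bracketed expression as $\exp(aX+b^{2}X^{2})-aX$, but they diverge in how they control this quantity. The paper splits on whether $a\le\frac{1}{2\sigma}$ or $a\ge\frac{1}{2\sigma}$; in the small-$a$ case it chains two one-variable inequalities, $e^{x}-x\le e^{x^{2}}$ and $x(e^{x^{2}}-1)\le e^{2x^{2}}-1$, to arrive at the pointwise bound $\exp(aX+b^{2}X^{2})-aX\le\exp\bigl((a^{2}+b^{2}+2\max(a,b)^{2})X^{2}\bigr)$, and in the large-$a$ case it uses $aX\le\frac{X^{2}}{4\sigma^{2}}+a^{2}\sigma^{2}$ directly inside the series. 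You instead prove a single two-variable pointwise inequality $e^{p+q}-p\le e^{2(p^{2}+q)}$ for $p,q\ge0$, which with $p=aX$, $q=b^{2}X^{2}$ yields the sharper bound $\exp\bigl(2(a^{2}+b^{2})X^{2}\bigr)$; your case split on $(a^{2}+b^{2})\sigma^{2}$ then serves only to ensure the subgaussian hypothesis is invoked at an admissible $\lambda$. Your route is arguably cleaner---one deterministic lemma carries the whole argument and the resulting exponent $2(a^{2}+b^{2})$ is tighter than the paper's $a^{2}+b^{2}+2\max(a,b)^{2}$---while the paper's decomposition into two one-variable facts has the advantage that each piece is immediately verifiable from a Taylor expansion without the two-variable calculus you carry out for $F$ and $G$.
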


\section{Analysis of Stochastic Mirror Descent}

\begin{algorithm}
\caption{Stochastic Mirror Descent Algorithm. $\psi\colon\protect\R^{d}\to\protect\R$
is a strongly convex mirror map. $\protect\breg\left(x,y\right)=\psi\left(x\right)-\psi\left(y\right)-\left\langle \nabla\psi\left(y\right),x-y\right\rangle $
is the Bregman divergence of $\psi$.}

\label{alg:md}

\textbf{Parameters:} initial point $x_{1}$, step sizes $\left\{ \eta_{t}\right\} $

for $t=1$ to $T$:

$\quad$$x_{t+1}=\arg\min_{x\in\dom}\left\{ \eta_{t}\left\langle \widehat{\nabla}f\left(x_{t}\right),x\right\rangle +\breg\left(x,x_{t}\right)\right\} $

return $\frac{1}{T}\sum_{t=1}^{T}x_{t}$
\end{algorithm}

In this section, we analyze the Stochastic Mirror Descent algorithm
(Algorithm \ref{alg:md}). For simplicity, here we consider the non-smooth
setting, and assume that $f$ is $G$-Lipschitz continuous, i.e.,
we have $\left\Vert \nabla f(x)\right\Vert \leq G$ for all $x\in\dom$.
The analysis for the smooth setting follows via a simple modification
to the analysis presented here as well as the analysis for the accelerated
setting given in the next section. 

We define
\[
\xi_{t}:=\widehat{\nabla}f\left(x_{t}\right)-\nabla f\left(x_{t}\right)
\]
We let $\F_{t}=\sigma\left(\xi_{1},\dots,\xi_{t-1}\right)$ denote
the natural filtration. Note that $x_{t}$ is $\F_{t}$-measurable.

The starting point of our analysis is the following inequality that
follows from the standard stochastic mirror descent analysis (see,
e.g., \cite{lan2020first}). We include the proof in the Appendix
for completeness.
\begin{lem}
(\cite{lan2020first})\label{lem:md-basic-analysis}For every iteration
$t$, we have
\[
\eta_{t}\left(f\left(x_{t}\right)-f\left(x^{*}\right)\right)-\eta_{t}^{2}G^{2}+\breg\left(x^{*},x_{t+1}\right)-\breg\left(x^{*},x_{t}\right)\leq\eta_{t}\left\langle \xi_{t},x^{*}-x_{t}\right\rangle +\eta_{t}^{2}\left\Vert \xi_{t}\right\Vert ^{2}
\]
\end{lem}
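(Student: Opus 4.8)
The plan is to run the standard three-point analysis for the mirror descent step, treating the stochastic gradient $\widehat{\nabla}f(x_t)$ as the linear functional we are minimizing against. First I would record the optimality condition for $x_{t+1} = \arg\min_{x\in\dom}\{\eta_t\langle\widehat{\nabla}f(x_t),x\rangle + \breg(x,x_t)\}$: for every $x\in\dom$,
\[
\langle \eta_t\widehat{\nabla}f(x_t) + \nabla\psi(x_{t+1}) - \nabla\psi(x_t), x - x_{t+1}\rangle \ge 0.
\]
Applying this with $x = x^*$ and rearranging gives $\eta_t\langle\widehat{\nabla}f(x_t), x_{t+1}-x^*\rangle \le \langle\nabla\psi(x_t)-\nabla\psi(x_{t+1}), x_{t+1}-x^*\rangle$. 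The right-hand side is exactly the quantity handled by the three-point identity for Bregman divergences,
\[
\langle\nabla\psi(x_t)-\nabla\psi(x_{t+1}), x_{t+1}-x^*\rangle = \breg(x^*,x_t) - \breg(x^*,x_{t+1}) - \breg(x_{t+1},x_t),
\]
so we obtain $\eta_t\langle\widehat{\nabla}f(x_t), x_{t+1}-x^*\rangle \le \breg(x^*,x_t) - \breg(x^*,x_{t+1}) - \breg(x_{t+1},x_t)$.

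Next I would split $\langle\widehat{\nabla}f(x_t), x_{t+1}-x^*\rangle = \langle\widehat{\nabla}f(x_t), x_t - x^*\rangle + \langle\widehat{\nabla}f(x_t), x_{t+1}-x_t\rangle$. For the first piece, write $\widehat{\nabla}f(x_t) = \nabla f(x_t) + \xi_t$ and use convexity, $\langle\nabla f(x_t), x_t - x^*\rangle \ge f(x_t) - f(x^*)$, which produces the term $\eta_t(f(x_t)-f(x^*))$ plus the noise term $\eta_t\langle\xi_t, x_t - x^*\rangle = -\eta_t\langle\xi_t, x^*-x_t\rangle$ (this is where the $\langle\xi_t, x^*-x_t\rangle$ on the right-hand side comes from, after moving it across). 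For the second piece, $\eta_t\langle\widehat{\nabla}f(x_t), x_{t+1}-x_t\rangle$, I would bound it against the $-\breg(x_{t+1},x_t)$ slack using strong convexity of $\psi$: by Cauchy--Schwarz and Young's inequality,
\[
\eta_t\langle\widehat{\nabla}f(x_t), x_{t+1}-x_t\rangle \le \eta_t\|\widehat{\nabla}f(x_t)\|\,\|x_{t+1}-x_t\| \le \frac{\eta_t^2}{2}\|\widehat{\nabla}f(x_t)\|^2 + \frac{1}{2}\|x_{t+1}-x_t\|^2,
\]
and $\frac12\|x_{t+1}-x_t\|^2 \le \breg(x_{t+1},x_t)$ by $1$-strong convexity of $\psi$, so this term cancels the $-\breg(x_{t+1},x_t)$. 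Finally, $\|\widehat{\nabla}f(x_t)\|^2 = \|\nabla f(x_t)+\xi_t\|^2 \le 2\|\nabla f(x_t)\|^2 + 2\|\xi_t\|^2 \le 2G^2 + 2\|\xi_t\|^2$ using the Lipschitz bound, turning $\frac{\eta_t^2}{2}\|\widehat{\nabla}f(x_t)\|^2$ into $\eta_t^2 G^2 + \eta_t^2\|\xi_t\|^2$. Collecting everything and rearranging yields exactly the stated inequality.

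The main obstacle is really just bookkeeping: one must be careful about the direction of the inner product $\langle\xi_t, x^*-x_t\rangle$ versus $\langle\xi_t, x_t-x^*\rangle$ and about which terms land on which side, and one must be slightly careful about whether $\psi$ is assumed $1$-strongly convex (the clean constants in the statement suggest yes; if it is $\mu$-strongly convex the constants would rescale). There is also a minor subtlety in the norm used for strong convexity versus the norm in which $f$ is $G$-Lipschitz and $\xi_t$ is measured — I would assume throughout, as is standard in this setting and as the later sections implicitly do, that these are dual norm pairs so that Cauchy--Schwarz applies cleanly. No probabilistic argument is needed here; the lemma is a purely deterministic per-step inequality, with the expectation and concentration deferred to the main analysis.
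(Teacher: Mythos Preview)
Your proposal is correct and follows essentially the same route as the paper's proof: optimality condition for $x_{t+1}$, three-point Bregman identity, convexity of $f$ to produce $f(x_t)-f(x^*)$ and isolate the noise term $\langle\xi_t,x^*-x_t\rangle$, then Young's inequality together with $\breg(x_{t+1},x_t)\ge\tfrac12\|x_{t+1}-x_t\|^2$ to absorb the $\langle\widehat{\nabla}f(x_t),x_t-x_{t+1}\rangle$ term, and finally $\|\widehat{\nabla}f(x_t)\|^2\le 2G^2+2\|\xi_t\|^2$. The only cosmetic slip is that the inner product you bound via Cauchy--Schwarz/Young should be $\eta_t\langle\widehat{\nabla}f(x_t),x_t-x_{t+1}\rangle$ (what appears on the right after rearranging) rather than $\eta_t\langle\widehat{\nabla}f(x_t),x_{t+1}-x_t\rangle$, but since the bound is symmetric in the sign this does not affect the argument.
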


We now turn our attention to our main concentration argument. Towards
our goal of obtaining a high-probability convergence rate, we analyze
the moment generating function for a random variable that is closely
related to the left-hand side of the inequality above. We let $w_{1}\geq w_{2}\geq\dots\geq w_{T}\geq w_{T+1}\geq0$
be a non-increasing sequence where $w_{t}\in\R$ for all $t$. We
define
\begin{align*}
Z_{t} & =w_{t+1}\left(\eta_{t}\left(f\left(x_{t}\right)-f\left(x^{*}\right)\right)-\eta_{t}^{2}G^{2}\right)+w_{T+1}\left(\breg\left(x^{*},x_{t+1}\right)-\breg\left(x^{*},x_{t}\right)\right) & \forall1\le t\leq T\\
S_{t} & =\sum_{i=t}^{T}Z_{i} & \forall1\leq t\leq T+1
\end{align*}
Before proceeding with the analysis, we provide intuition for our
approach. If we consider $S_{1}$, we see that it combines the gains
in function value gaps with weights given by the non-increasing sequence
$\left\{ w_{t}\right\} $. The intuition here is that we want to leverage
the progress in function value to absorb the error from the stochastic
error terms on the RHS of Lemma \ref{lem:md-basic-analysis}. For
the divergence terms, we use the same coefficient to allow for the
terms to telescope. In Theorem \ref{thm:md-concentration-subgaussian},
we upper bound the moment generating function of $S_{1}$ and derive
a set of conditions for the weights $\left\{ w_{t}\right\} $ that
allow us to absorb the stochastic errors. In Corollary \ref{cor:md-convergence},
we show how to choose the weights $\left\{ w_{t}\right\} $ and obtain
a convergence rate that matches the standard rates that hold in expectation.

We now give our main concentration argument that bounds the moment
generating function of $S_{t}$. The proof of the following theorem
is nspired by the proof of Theorem 7.3 in \cite{chung2006concentration}.
\begin{thm}
\label{thm:md-concentration-subgaussian}Suppose that $w_{t}\geq w_{t+1}+6\sigma^{2}\eta_{t}^{2}w_{t+1}^{2}$
and $w_{t+1}\eta_{t}^{2}\leq\frac{1}{4\sigma^{2}}$ for every $1\leq t\leq T$.
For every $1\le t\leq T+1$, we have
\[
\E\left[\exp\left(S_{t}\right)\vert\F_{t}\right]\leq\exp\left(\left(w_{t}-w_{T+1}\right)\breg\left(x^{*},x_{t}\right)+3\sigma^{2}\sum_{i=t}^{T}w_{i+1}\eta_{i}^{2}\right)
\]
\end{thm}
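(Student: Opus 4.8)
The plan is to prove the bound by backward induction on $t$, from $t=T+1$ down to $t=1$. The base case $t=T+1$ is immediate: $S_{T+1}$ is an empty sum, so $\E[\exp(S_{T+1})\mid\F_{T+1}]=1$, and the claimed right-hand side is also $\exp\bigl((w_{T+1}-w_{T+1})\breg(x^*,x_{T+1})+0\bigr)=1$.

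For the inductive step, assume the bound holds at $t+1$. Since $x_{t+1}$ is determined by $x_t$ and $\xi_t$, the random variable $Z_t$ is $\F_{t+1}$-measurable, so by the tower property and the inductive hypothesis,
\begin{align*}
\E[\exp(S_t)\mid\F_t] &= \E\bigl[\exp(Z_t)\,\E[\exp(S_{t+1})\mid\F_{t+1}]\bigm|\F_t\bigr]\\
&\le \exp\Bigl(3\sigma^2\textstyle\sum_{i=t+1}^{T}w_{i+1}\eta_i^2\Bigr)\,\E\bigl[\exp\bigl(Z_t+(w_{t+1}-w_{T+1})\breg(x^*,x_{t+1})\bigr)\bigm|\F_t\bigr].
\end{align*}
The $w_{T+1}\breg(x^*,x_{t+1})$ terms cancel, so the exponent inside the last expectation equals $w_{t+1}\bigl(\eta_t(f(x_t)-f(x^*))-\eta_t^2G^2+\breg(x^*,x_{t+1})\bigr)-w_{T+1}\breg(x^*,x_t)$, and Lemma \ref{lem:md-basic-analysis} (scaled by $w_{t+1}\ge0$) bounds it by $(w_{t+1}-w_{T+1})\breg(x^*,x_t)+w_{t+1}\eta_t\langle\xi_t,x^*-x_t\rangle+w_{t+1}\eta_t^2\|\xi_t\|^2$. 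The first term here is $\F_t$-measurable, so it remains to control $\E\bigl[\exp\bigl(w_{t+1}\eta_t\langle\xi_t,x^*-x_t\rangle+w_{t+1}\eta_t^2\|\xi_t\|^2\bigr)\bigm|\F_t\bigr]$.

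This moment-generating-function estimate is the crux of the argument. I would set $X:=\|\xi_t\|$, a nonnegative $\sigma$-subgaussian random variable, together with $a:=w_{t+1}\eta_t\|x^*-x_t\|$ and $b:=\eta_t\sqrt{w_{t+1}}$, so that $w_{t+1}\eta_t^2\|\xi_t\|^2=b^2X^2$ and $|w_{t+1}\eta_t\langle\xi_t,x^*-x_t\rangle|\le aX$ by Cauchy--Schwarz. Expanding $\exp(\cdot)$ as a power series, the degree-one term contributes only $\E[b^2X^2\mid\F_t]$ in conditional expectation, since $\E[\langle\xi_t,x^*-x_t\rangle\mid\F_t]=0$ by unbiasedness of the gradient estimator and $\F_t$-measurability of $x_t$, while each degree-$i$ term with $i\ge2$ is dominated in absolute value by $\tfrac1{i!}(aX+b^2X^2)^i$. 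The hypothesis $w_{t+1}\eta_t^2\le\tfrac1{4\sigma^2}$ is exactly $b\le\tfrac1{2\sigma}$, so Lemma \ref{lem:helper-taylor} yields the bound $\exp\bigl(3(a^2+b^2)\sigma^2\bigr)$; using $1$-strong convexity of $\psi$, i.e. $\|x^*-x_t\|^2\le2\breg(x^*,x_t)$, this is at most $\exp\bigl(6\sigma^2\eta_t^2w_{t+1}^2\breg(x^*,x_t)+3\sigma^2w_{t+1}\eta_t^2\bigr)$. Multiplying everything together, the total exponent is $(w_{t+1}+6\sigma^2\eta_t^2w_{t+1}^2-w_{T+1})\breg(x^*,x_t)+3\sigma^2\sum_{i=t}^{T}w_{i+1}\eta_i^2$, and the hypothesis $w_t\ge w_{t+1}+6\sigma^2\eta_t^2w_{t+1}^2$ (with $\breg(x^*,x_t)\ge0$) replaces the first coefficient by $w_t-w_{T+1}$, giving exactly the claimed bound at step $t$. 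The main obstacle is precisely this MGF step: one must exploit the zero conditional mean of $\langle\xi_t,x^*-x_t\rangle$ to remove the otherwise-problematic linear term, arrange the remaining series so it matches the precise form required by Lemma \ref{lem:helper-taylor}, and convert the Euclidean quantity $\|x^*-x_t\|^2$ into a multiple of $\breg(x^*,x_t)$ so that the weight recursion $w_t\ge w_{t+1}+6\sigma^2\eta_t^2w_{t+1}^2$ is exactly what closes the induction.
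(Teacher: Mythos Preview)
Your proposal is correct and follows essentially the same approach as the paper: backward induction with the same base case, the tower property to peel off $Z_t$, Lemma~\ref{lem:md-basic-analysis} to bound the exponent, the Taylor expansion combined with $\E[\langle\xi_t,x^*-x_t\rangle\mid\F_t]=0$ to kill the linear term, Cauchy--Schwarz and Lemma~\ref{lem:helper-taylor} (with the same identifications $a=w_{t+1}\eta_t\|x^*-x_t\|$, $b^2=w_{t+1}\eta_t^2$) for the MGF estimate, strong convexity to pass from $\|x^*-x_t\|^2$ to $\breg(x^*,x_t)$, and the recursion $w_t\ge w_{t+1}+6\sigma^2\eta_t^2 w_{t+1}^2$ to close. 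The only cosmetic difference is that you first combine $Z_t$ with the inductive term $(w_{t+1}-w_{T+1})\breg(x^*,x_{t+1})$ and observe the cancellation of the $w_{T+1}\breg(x^*,x_{t+1})$ pieces before invoking Lemma~\ref{lem:md-basic-analysis}, whereas the paper first bounds $Z_t$ alone; the resulting algebra is identical.
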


\begin{proof}
We proceed by induction on $t$. Consider the base case $t=T+1$.
We have $S_{t}=0$ and $\left(w_{t}-w_{T+1}\right)\breg\left(x^{*},x_{t}\right)=0$,
and the inequality follows. Next, we consider $1\leq t\leq T$. We
have
\begin{align}
\E\left[\exp\left(S_{t}\right)\vert\F_{t}\right] & =\E\left[\exp\left(Z_{t}+S_{t+1}\right)\vert\F_{t}\right]=\E\left[\E\left[\exp\left(Z_{t}+S_{t+1}\right)\vert\F_{t+1}\right]\vert\F_{t}\right]\label{eq:1}
\end{align}
We now analyze the inner expectation. Conditioned on $\F_{t+1}$,
$Z_{t}$ is fixed. Using the inductive hypothesis , we obtain
\begin{align}
\E\left[\exp\left(Z_{t}+S_{t+1}\right)\vert\F_{t+1}\right]\leq\exp\left(Z_{t}\right)\exp\left(\left(w_{t+1}-w_{T+1}\right)\breg\left(x^{*},x_{t+1}\right)+3\sigma^{2}\sum_{i=t+1}^{T}w_{i+1}\eta_{i}^{2}\right)\label{eq:2}
\end{align}
Let $X_{t}=\eta_{t}\left\langle \xi_{t},x^{*}-x_{t}\right\rangle $.
By Lemma \ref{lem:md-basic-analysis}, we have
\[
\eta_{t}\left(f\left(x_{t}\right)-f\left(x^{*}\right)\right)-\eta_{t}^{2}G^{2}\leq X_{t}-\left(\breg\left(x^{*},x_{t+1}\right)-\breg\left(x^{*},x_{t}\right)\right)+\eta_{t}^{2}\left\Vert \xi_{t}\right\Vert ^{2}
\]
and thus
\begin{align*}
Z_{t} & =w_{t+1}\left(\eta_{t}\left(f\left(x_{t}\right)-f\left(x^{*}\right)\right)-\eta_{t}^{2}G^{2}\right)+w_{T+1}\left(\breg\left(x^{*},x_{t+1}\right)-\breg\left(x^{*},x_{t}\right)\right)\\
 & \leq w_{t+1}\left(X_{t}-\left(\breg\left(x^{*},x_{t+1}\right)-\breg\left(x^{*},x_{t}\right)\right)+\eta_{t}^{2}\left\Vert \xi_{t}\right\Vert ^{2}\right)+w_{T+1}\left(\breg\left(x^{*},x_{t+1}\right)-\breg\left(x^{*},x_{t}\right)\right)\\
 & =w_{t+1}X_{t}-\left(w_{t+1}-w_{T+1}\right)\left(\breg\left(x^{*},x_{t+1}\right)-\breg\left(x^{*},x_{t}\right)\right)+w_{t+1}\eta_{t}^{2}\left\Vert \xi_{t}\right\Vert ^{2}
\end{align*}
Plugging into \eqref{eq:2}, we obtain
\[
\E\left[\exp\left(Z_{t}+S_{t+1}\right)\vert\F_{t+1}\right]\leq\exp\left(w_{t+1}X_{t}+\left(w_{t+1}-w_{T+1}\right)\breg\left(x^{*},x_{t}\right)+w_{t+1}\eta_{t}^{2}\left\Vert \xi_{t}\right\Vert ^{2}+3\sigma^{2}\sum_{i=t+1}^{T}w_{i+1}\eta_{i}^{2}\right)
\]
Plugging into \eqref{eq:1}, we obtain
\begin{equation}
\E\left[\exp\left(S_{t}\right)\vert\F_{t}\right]\leq\exp\left(\left(w_{t+1}-w_{T+1}\right)\breg\left(x^{*},x_{t}\right)+3\sigma^{2}\sum_{i=t+1}^{T}w_{i+1}\eta_{i}^{2}\right)\E\left[\exp\left(w_{t+1}X_{t}+w_{t+1}\eta_{t}^{2}\left\Vert \xi_{t}\right\Vert ^{2}\right)\vert\F_{t}\right]\label{eq:3}
\end{equation}
Next, we analyze the the expectation on the RHS of the above inequality.
We have
\begin{align}
 & \E\left[\exp\left(w_{t+1}X_{t}+w_{t+1}\eta_{t}^{2}\left\Vert \xi_{t}\right\Vert ^{2}\right)\vert\F_{t}\right]\nonumber \\
 & =\E\left[\sum_{i=0}^{\infty}\frac{1}{i!}\left(w_{t+1}X_{t}+w_{t+1}\eta_{t}^{2}\left\Vert \xi_{t}\right\Vert ^{2}\right)^{i}\vert\F_{t}\right]\nonumber \\
 & =\E\left[1+w_{t+1}\eta_{t}^{2}\left\Vert \xi_{t}\right\Vert ^{2}+\sum_{i=2}^{\infty}\frac{1}{i!}\left(w_{t+1}X_{t}+w_{t+1}\eta_{t}^{2}\left\Vert \xi_{t}\right\Vert ^{2}\right)^{i}\vert\F_{t}\right]\nonumber \\
 & \leq\E\left[1+w_{t+1}\eta_{t}^{2}\left\Vert \xi_{t}\right\Vert ^{2}+\sum_{i=2}^{\infty}\frac{1}{i!}\left(w_{t+1}\eta_{t}\left\Vert x^{*}-x_{t}\right\Vert \left\Vert \xi_{t}\right\Vert +w_{t+1}\eta_{t}^{2}\left\Vert \xi_{t}\right\Vert ^{2}\right)^{i}\vert\F_{t}\right]\nonumber \\
 & \leq\exp\left(3\sigma^{2}\left(w_{t+1}^{2}\eta_{t}^{2}\left\Vert x^{*}-x_{t}\right\Vert ^{2}+w_{t+1}\eta_{t}^{2}\right)\right)\nonumber \\
 & \leq\exp\left(3\sigma^{2}\left(2w_{t+1}^{2}\eta_{t}^{2}\breg\left(x^{*},x_{t}\right)+w_{t+1}\eta_{t}^{2}\right)\right)\label{eq:4}
\end{align}
On the first line we used the Taylor expansion of $e^{x}$, and on
the second line we used that $\E\left[X_{t}\vert\F_{t}\right]=0$.
On the third line, we used Cauchy-Schwartz and obtained
\[
X_{t}=\eta_{t}\left\langle \xi_{t},x^{*}-x_{t}\right\rangle \leq\eta_{t}\left\Vert \xi_{t}\right\Vert \left\Vert x^{*}-x_{t}\right\Vert 
\]
On the fourth line, we applied Lemma \ref{lem:helper-taylor} with
$X=\left\Vert \xi_{t}\right\Vert $, $a=w_{t+1}\eta_{t}\left\Vert x^{*}-x_{t}\right\Vert $,
and $b^{2}=w_{t+1}\eta_{t}^{2}\le\frac{1}{4\sigma^{2}}$. On the fifth
line, we used that $\breg\left(x^{*},x_{t}\right)\geq\frac{1}{2}\left\Vert x^{*}-x_{t}\right\Vert ^{2}$,
which follows from the strong convexity of $\psi$.

Plugging \eqref{eq:4} into \eqref{eq:3} and using that $w_{t}\geq w_{t+1}+6\sigma^{2}\eta_{t}^{2}w_{t+1}^{2}$,
we obtain
\begin{align*}
\E\left[\exp\left(S_{t}\right)\vert\F_{t}\right] & \leq\exp\left(\left(w_{t+1}+6\sigma^{2}\eta_{t}^{2}w_{t+1}^{2}-w_{T+1}\right)\breg\left(x^{*},x_{t}\right)+3\sigma^{2}\sum_{i=t}^{T}w_{i+1}\eta_{i}^{2}\right)\\
 & \leq\exp\left(\left(w_{t}-w_{T+1}\right)\breg\left(x^{*},x_{t}\right)+3\sigma^{2}\sum_{i=t}^{T}w_{i+1}\eta_{i}^{2}\right)
\end{align*}
as needed.
\end{proof}

Theorem \ref{thm:md-concentration-subgaussian} and Markov's inequality
gives us the following convergence guarantee.
\begin{cor}
\label{cor:md-convergence}Suppose the sequence $\left\{ w_{t}\right\} $
satisfies the conditions of Theorem \ref{thm:md-concentration-subgaussian}.
For any $\delta>0$, the following event holds with probability at
least $1-\delta$:
\begin{align*}
 & \sum_{t=1}^{T}w_{t+1}\eta_{t}\left(f\left(x_{t}\right)-f\left(x^{*}\right)\right)+w_{T+1}\breg\left(x^{*},x_{T+1}\right)\\
 & \leq w_{1}\breg\left(x^{*},x_{1}\right)+\left(G^{2}+3\sigma^{2}\right)\sum_{t=1}^{T}w_{t+1}\eta_{t}^{2}+\ln\left(\frac{1}{\delta}\right)
\end{align*}
\end{cor}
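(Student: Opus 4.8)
The plan is to combine Theorem~\ref{thm:md-concentration-subgaussian} (instantiated at $t=1$) with Markov's inequality applied to the random variable $\exp(S_1)$. First I would write out $S_1 = \sum_{t=1}^{T} Z_t$ explicitly. Expanding the definition of $Z_t$, the function-value contributions are $\sum_{t=1}^T w_{t+1}\left(\eta_t(f(x_t)-f(x^*))-\eta_t^2 G^2\right)$, while the Bregman contributions $w_{T+1}\left(\breg(x^*,x_{t+1})-\breg(x^*,x_t)\right)$ telescope (since the coefficient $w_{T+1}$ is the same for every $t$) to give $w_{T+1}\left(\breg(x^*,x_{T+1})-\breg(x^*,x_1)\right)$. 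Hence
\[
S_1 = \sum_{t=1}^{T} w_{t+1}\eta_t\left(f(x_t)-f(x^*)\right) - \sum_{t=1}^{T} w_{t+1}\eta_t^2 G^2 + w_{T+1}\breg(x^*,x_{T+1}) - w_{T+1}\breg(x^*,x_1).
\]

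Next I would invoke Theorem~\ref{thm:md-concentration-subgaussian} with $t=1$. Since $\F_1 = \sigma(\emptyset)$ is trivial (and $x_1$ is deterministic), the conditional expectation is just an expectation, giving
\[
\E\left[\exp(S_1)\right] \leq \exp\left((w_1 - w_{T+1})\breg(x^*,x_1) + 3\sigma^2 \sum_{i=1}^{T} w_{i+1}\eta_i^2\right).
\]
By Markov's inequality, $\Pr\left[\exp(S_1) \geq \frac{1}{\delta}\E\left[\exp(S_1)\right]\right] \leq \delta$, so with probability at least $1-\delta$ we have $S_1 \leq \ln\E\left[\exp(S_1)\right] + \ln(1/\delta) \leq (w_1 - w_{T+1})\breg(x^*,x_1) + 3\sigma^2\sum_{i=1}^T w_{i+1}\eta_i^2 + \ln(1/\delta)$.

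Finally I would substitute the explicit form of $S_1$ into this bound and rearrange. Moving $-\sum_t w_{t+1}\eta_t^2 G^2$ and $-w_{T+1}\breg(x^*,x_1)$ to the right-hand side, the $-w_{T+1}\breg(x^*,x_1)$ cancels against the $+w_{T+1}$ piece inside $(w_1-w_{T+1})\breg(x^*,x_1)$, leaving $w_1\breg(x^*,x_1)$; the two $\sigma$-and-$G$ sums combine into $(G^2 + 3\sigma^2)\sum_{t=1}^T w_{t+1}\eta_t^2$. This yields exactly the claimed inequality. There is no real obstacle here — the argument is a direct chain of (i) telescoping the Bregman terms, (ii) applying the theorem at the base index, (iii) Markov, (iv) algebraic rearrangement; the only point requiring a line of care is confirming that the coefficient bookkeeping on $\breg(x^*,x_1)$ produces $w_1$ rather than $w_1 - w_{T+1}$ after the cancellation.
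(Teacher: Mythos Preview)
Your proposal is correct and follows essentially the same approach as the paper: apply Theorem~\ref{thm:md-concentration-subgaussian} at $t=1$, use Markov's inequality on $\exp(S_1)$, expand $S_1$ via the telescoping Bregman terms, and rearrange so that the $-w_{T+1}\breg(x^*,x_1)$ cancels to leave $w_1\breg(x^*,x_1)$. The only cosmetic difference is that the paper fixes the threshold $K$ first and then bounds $\Pr[S_1\geq K]$, whereas you phrase Markov as $\Pr[\exp(S_1)\geq \delta^{-1}\E\exp(S_1)]\leq\delta$; the two are equivalent.
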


\begin{proof}
Let
\[
K=\left(w_{1}-w_{T+1}\right)\breg\left(x^{*},x_{1}\right)+3\sigma^{2}\sum_{t=1}^{T}w_{t+1}\eta_{t}^{2}+\ln\left(\frac{1}{\delta}\right)
\]
By Theorem \ref{thm:md-concentration-subgaussian} and Markov's inequality,
we have
\begin{align*}
\Pr\left[S_{1}\geq K\right] & \leq\Pr\left[\exp\left(S_{1}\right)\geq\exp\left(K\right)\right]\\
 & \leq\exp\left(-K\right)\E\left[\exp\left(S_{1}\right)\right]\\
 & \leq\exp\left(-K\right)\exp\left(\left(w_{1}-w_{T+1}\right)\breg\left(x^{*},x_{1}\right)+3\sigma^{2}\sum_{t=1}^{T}w_{t+1}\eta_{t}^{2}\right)\\
 & =\delta
\end{align*}
Note that
\begin{align*}
S_{1} & =\sum_{t=1}^{T}Z_{t}=\sum_{t=1}^{T}w_{t+1}\eta_{t}\left(f\left(x_{t}\right)-f\left(x^{*}\right)\right)-G^{2}\sum_{t=1}^{T}w_{t+1}\eta_{t}^{2}+w_{T+1}\left(\breg\left(x^{*},x_{T}\right)-\breg\left(x^{*},x_{1}\right)\right)
\end{align*}
Therefore, with probability at least $1-\delta$, we have
\[
\sum_{t=1}^{T}w_{t+1}\eta_{t}\left(f\left(x_{t}\right)-f\left(x^{*}\right)\right)+w_{T+1}\breg\left(x^{*},x_{T+1}\right)\leq w_{1}\breg\left(x^{*},x_{1}\right)+\left(G^{2}+3\sigma^{2}\right)\sum_{t=1}^{T}w_{t+1}\eta_{t}^{2}+\ln\left(\frac{1}{\delta}\right)
\]
\end{proof}

With the above result in hand, we complete the convergence analysis
by showing how to define the sequence $\left\{ w_{t}\right\} $ with
the desired properties.
\begin{cor}
Suppose we run the Stochastic Mirror Descent algorithm with fixed
step sizes $\eta_{t}=\eta$. Let $w_{T+1}=\frac{1}{12\sigma^{2}\eta^{2}\left(T+1\right)}$
and $w_{t}=w_{t+1}+6\sigma^{2}\eta^{2}w_{t+1}^{2}$ for all $1\leq t\leq T$.
The sequence $\left\{ w_{t}\right\} $ satisfies the conditions required
by Corollary \ref{cor:md-convergence}. By Corollary \ref{cor:md-convergence},
for any $\delta>0$, the following events hold with probability at
least $1-\delta$:
\[
\frac{1}{T}\sum_{t=1}^{T}\left(f\left(x_{t}\right)-f\left(x^{*}\right)\right)\leq O\left(\frac{\breg\left(x^{*},x_{1}\right)}{\eta T}+\left(G^{2}+\sigma^{2}\left(1+\ln\left(\frac{1}{\delta}\right)\right)\right)\eta\right)
\]
and
\[
\breg\left(x^{*},x_{T+1}\right)\leq O\left(\breg\left(x^{*},x_{1}\right)+\left(G^{2}+\sigma^{2}\left(1+\ln\left(\frac{1}{\delta}\right)\right)\right)\eta^{2}T\right)
\]
Setting $\eta=\sqrt{\frac{\breg\left(x^{*},x_{1}\right)}{\left(G^{2}+\sigma^{2}\left(1+\ln\left(\frac{1}{\delta}\right)\right)\right)T}}$
to balance the two terms in the first inequality gives
\[
\frac{1}{T}\sum_{t=1}^{T}\left(f\left(x_{t}\right)-f\left(x^{*}\right)\right)\leq O\left(\sqrt{\frac{\breg\left(x^{*},x_{1}\right)\left(G^{2}+\sigma^{2}\left(1+\ln\left(\frac{1}{\delta}\right)\right)\right)}{T}}\right)
\]
and
\[
\breg\left(x^{*},x_{T+1}\right)\leq O\left(\breg\left(x^{*},x_{1}\right)\right)
\]
\end{cor}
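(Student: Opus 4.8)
The plan is to first establish tight two-sided control on the weight sequence $\left\{w_{t}\right\}$, and then feed it into Corollary~\ref{cor:md-convergence}. Write $c=6\sigma^{2}\eta^{2}$, so the recursion reads $w_{t}=w_{t+1}+c\,w_{t+1}^{2}$ with $w_{T+1}=\frac{1}{2c(T+1)}$. Since every step adds a nonnegative quantity, $w_{1}\geq w_{2}\geq\dots\geq w_{T+1}>0$, so the sequence has the form required by Theorem~\ref{thm:md-concentration-subgaussian} and its first condition $w_{t}\geq w_{t+1}+6\sigma^{2}\eta_{t}^{2}w_{t+1}^{2}$ holds with equality. To control magnitudes I would pass to $u_{t}:=1/w_{t}$: from $w_{t}=w_{t+1}(1+cw_{t+1})$ one gets $u_{t}=u_{t+1}-c+\frac{c^{2}}{u_{t+1}+c}$, so $u_{t+1}-c\leq u_{t}\leq u_{t+1}$ for every $t$. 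Iterating downward from $u_{T+1}=2c(T+1)$ gives $c(T+1)\leq u_{t}\leq 2c(T+1)$, i.e.
\[
\frac{1}{2c(T+1)}\;\leq\;w_{t}\;\leq\;\frac{1}{c(T+1)}\qquad(1\leq t\leq T+1).
\]
In particular $w_{t+1}\eta^{2}\leq\frac{\eta^{2}}{c(T+1)}\leq\frac{1}{6\sigma^{2}}<\frac{1}{4\sigma^{2}}$, so the second condition of Theorem~\ref{thm:md-concentration-subgaussian} also holds and Corollary~\ref{cor:md-convergence} is applicable.

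Next I would feed these bounds into the inequality of Corollary~\ref{cor:md-convergence}, which holds with probability at least $1-\delta$. Since $f(x_{t})\geq f(x^{*})$, its left-hand side is at least $\frac{\eta}{2c(T+1)}\sum_{t=1}^{T}\left(f(x_{t})-f(x^{*})\right)+w_{T+1}\breg(x^{*},x_{T+1})$, while on the right-hand side I use $w_{1}\leq\frac{1}{c(T+1)}$ and $\sum_{t=1}^{T}w_{t+1}\eta^{2}\leq\frac{T\eta^{2}}{c(T+1)}\leq\frac{1}{6\sigma^{2}}$. For the first displayed bound, drop the nonnegative term $w_{T+1}\breg(x^{*},x_{T+1})$ on the left, multiply through by $\frac{2c(T+1)}{\eta}=12\sigma^{2}\eta(T+1)$, divide by $T$, and use $\frac{T+1}{T}\leq 2$; this yields $\frac{1}{T}\sum_{t=1}^{T}\left(f(x_{t})-f(x^{*})\right)\leq O\!\left(\frac{\breg(x^{*},x_{1})}{\eta T}+\left(G^{2}+\sigma^{2}(1+\ln(1/\delta))\right)\eta\right)$. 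For the second, drop instead the nonnegative function-value sum on the left, divide by $w_{T+1}$, and use $\frac{w_{1}}{w_{T+1}}\leq 2$, $\frac{1}{w_{T+1}}=12\sigma^{2}\eta^{2}(T+1)$, and $\frac{1}{w_{T+1}}\sum_{t=1}^{T}w_{t+1}\eta^{2}\leq 2T\eta^{2}$; together with $T+1\leq 2T$ this gives $\breg(x^{*},x_{T+1})\leq O\!\left(\breg(x^{*},x_{1})+\left(G^{2}+\sigma^{2}(1+\ln(1/\delta))\right)\eta^{2}T\right)$.

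The two balanced bounds are then immediate from the substitution $\eta=\sqrt{\breg(x^{*},x_{1})/\left(\left(G^{2}+\sigma^{2}(1+\ln(1/\delta))\right)T\right)}$: with this choice both $\frac{\breg(x^{*},x_{1})}{\eta T}$ and $\left(G^{2}+\sigma^{2}(1+\ln(1/\delta))\right)\eta$ equal $\sqrt{\breg(x^{*},x_{1})\left(G^{2}+\sigma^{2}(1+\ln(1/\delta))\right)/T}$, and $\left(G^{2}+\sigma^{2}(1+\ln(1/\delta))\right)\eta^{2}T=\breg(x^{*},x_{1})$, so the second bound collapses to $\breg(x^{*},x_{T+1})=O\!\left(\breg(x^{*},x_{1})\right)$.

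I expect the two-sided estimate on $\left\{w_{t}\right\}$ to be the only real obstacle: the lower bound $w_{t}\geq\frac{1}{2c(T+1)}$ is exactly what converts the weighted sum $\sum_{t}w_{t+1}\eta\left(f(x_{t})-f(x^{*})\right)$ back into an honest average with only a constant-factor loss, while the upper bound $w_{t}\leq\frac{1}{c(T+1)}$ is needed both to verify the hypothesis $w_{t+1}\eta^{2}\leq\frac{1}{4\sigma^{2}}$ of Theorem~\ref{thm:md-concentration-subgaussian} and to keep the error term $w_{1}\breg(x^{*},x_{1})+(G^{2}+3\sigma^{2})\sum_{t}w_{t+1}\eta^{2}$ of the right order. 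The substitution $u_{t}=1/w_{t}$ makes this transparent, since $u_{t}$ then changes by an amount in $[0,c]$ at each step, so it behaves like an arithmetic progression; everything downstream is routine arithmetic.
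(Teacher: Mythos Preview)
Your proposal is correct and follows essentially the same approach as the paper: verify the two hypotheses of Theorem~\ref{thm:md-concentration-subgaussian} by showing $\frac{1}{2c(T+1)}\leq w_{t}\leq\frac{1}{c(T+1)}$, then plug into Corollary~\ref{cor:md-convergence} and simplify. The only cosmetic difference is that the paper proves the upper bound $w_{t}\leq\frac{1}{c(T+1+t)}$ by a direct induction on $w_{t}$, whereas you pass to the reciprocal $u_{t}=1/w_{t}$ and observe it decreases by at most $c$ per step; both arguments yield the same operational bounds and the downstream arithmetic is identical.
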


\begin{proof}
Recall from Corollary \ref{cor:md-convergence} that the sequence
$\left\{ w_{t}\right\} $ needs to satisfy the following conditions
for all $1\leq t\leq T$:
\begin{align*}
w_{t+1}+6\sigma^{2}\eta_{t}^{2} & w_{t+1}^{2}\leq w_{t}\\
w_{t+1}\eta_{t}^{2} & \leq\frac{1}{4\sigma^{2}}
\end{align*}
Let $C=6\sigma^{2}\eta^{2}\left(T+1\right)$. We set $w_{T+1}=\frac{1}{C+6\sigma^{2}\eta^{2}\left(T+1\right)}=\frac{1}{2C}$.
For $1\leq t\leq T$, we set $w_{t}$ so that the first condition
holds with equality
\[
w_{t}=w_{t+1}+6\sigma^{2}w_{t+1}^{2}\eta_{t}^{2}=w_{t+1}+6\sigma^{2}\eta^{2}w_{t+1}^{2}
\]
We can show by induction that, for every $1\leq t\leq T+1$, we have
\[
w_{t}\leq\frac{1}{C+6\sigma^{2}\eta^{2}t}
\]
The base case $t=T+1$ follows from the definition of $w_{T+1}$.
Consider $1\leq t\le T$. Using the definition of $w_{t}$ and the
inductive hypothesis, we obtain 
\begin{align*}
w_{t} & =w_{t+1}+6\sigma^{2}\eta^{2}w_{t+1}^{2}\\
 & \leq\frac{1}{C+6\sigma^{2}\eta^{2}\left(t+1\right)}+\frac{6\sigma^{2}\eta^{2}}{\left(C+6\sigma^{2}\eta^{2}\left(t+1\right)\right)^{2}}\\
 & \leq\frac{1}{C+6\sigma^{2}\eta^{2}\left(t+1\right)}+\frac{\left(C+6\sigma^{2}\eta^{2}\left(t+1\right)\right)-\left(C+6\sigma^{2}\eta^{2}t\right)}{\left(C+6\sigma^{2}\eta^{2}\left(t+1\right)\right)\left(C+6\sigma^{2}\eta^{2}t\right)}\\
 & =\frac{1}{C+6\sigma^{2}\eta^{2}t}
\end{align*}
as needed.

Using this fact, we now show that $\left\{ w_{t}\right\} $ satisfies
the second condition. For every $1\leq t\leq T$, we have
\[
w_{t+1}\eta_{t}^{2}=w_{t+1}\eta^{2}\leq\frac{\eta^{2}}{C}=\frac{1}{6\sigma^{2}\left(T+1\right)}\leq\frac{1}{6\sigma^{2}}
\]
as needed.

Thus, by Corollary \ref{cor:md-convergence}, with probability $\geq1-\delta$,
we have
\begin{align*}
\sum_{t=1}^{T}w_{t+1}\eta_{t}\left(f\left(x_{t}\right)-f\left(x^{*}\right)\right)+w_{T+1}\breg\left(x^{*},x_{T+1}\right) & \leq w_{1}\breg\left(x^{*},x_{1}\right)+\left(G^{2}+3\sigma^{2}\right)\sum_{t=1}^{T}w_{t+1}\eta_{t}^{2}+\ln\left(\frac{1}{\delta}\right)
\end{align*}
Note that $w_{T+1}=\frac{1}{2C}$ and $\frac{1}{2C}\leq w_{t}\leq\frac{1}{C}$
for all $1\leq t\leq T+1$. Thus we obtain
\begin{align*}
\eta\sum_{t=1}^{T}\left(f\left(x_{t}\right)-f\left(x^{*}\right)\right)+\breg\left(x^{*},x_{T+1}\right) & \leq2\breg\left(x^{*},x_{1}\right)+2\left(G^{2}+3\sigma^{2}\right)\eta^{2}T+2C\ln\left(\frac{1}{\delta}\right)\\
 & =2\breg\left(x^{*},x_{1}\right)+2\left(G^{2}+3\sigma^{2}\right)\eta^{2}T+12\sigma^{2}\ln\left(\frac{1}{\delta}\right)\eta^{2}\left(T+1\right)\\
 & \leq2\breg\left(x^{*},x_{1}\right)+\left(2G^{2}+6\sigma^{2}\left(1+4\ln\left(\frac{1}{\delta}\right)\right)\right)\eta^{2}T
\end{align*}
Thus we have
\[
\frac{1}{T}\sum_{t=1}^{T}\left(f\left(x_{t}\right)-f\left(x^{*}\right)\right)\leq\frac{2\breg\left(x^{*},x_{1}\right)}{\eta T}+\left(2G^{2}+6\sigma^{2}\left(1+4\ln\left(\frac{1}{\delta}\right)\right)\right)\eta
\]
and
\[
\breg\left(x^{*},x_{T+1}\right)\leq2\breg\left(x^{*},x_{1}\right)+\left(2G^{2}+6\sigma^{2}\left(1+4\ln\left(\frac{1}{\delta}\right)\right)\right)\eta^{2}T
\]
\end{proof}

The analysis readily extends to the setting where the time horizon
$T$ is not known and we set time-varying step sizes. We include below
the analysis for well-studied steps $\eta_{t}=\frac{\eta}{\sqrt{t}}$.
\begin{cor}
Suppose we run the Stochastic Mirror Descent algorithm with time-varying
step sizes $\eta_{t}=\frac{\eta}{\sqrt{t}}$. Let $w_{T+1}=\frac{1}{12\sigma^{2}\eta^{2}\left(\sum_{t=1}^{T}\frac{1}{t}\right)}$
and $w_{t}=w_{t+1}+6\sigma^{2}\eta^{2}w_{t+1}^{2}$ for all $1\leq t\leq T$.
The sequence $\left\{ w_{t}\right\} $ satisfies the conditions required
by Corollary \ref{cor:md-convergence}. By Corollary \ref{cor:md-convergence},
for any $\delta>0$, the following events hold with probability at
least $1-\delta$:
\[
\frac{1}{T}\sum_{t=1}^{T}\left(f\left(x_{t}\right)-f\left(x^{*}\right)\right)\leq O\left(\frac{1}{\sqrt{T}}\left(\frac{\breg\left(x^{*},x_{1}\right)}{\eta}+\eta\left(G^{2}+\sigma^{2}\left(1+\ln\left(\frac{1}{\delta}\right)\right)\right)\ln T\right)\right)
\]
and
\[
\breg\left(x^{*},x_{T+1}\right)\leq O\left(\breg\left(x^{*},x_{1}\right)+\eta^{2}\left(G^{2}+\sigma^{2}\left(1+\ln\left(\frac{1}{\delta}\right)\right)\right)\ln T\right)
\]
\end{cor}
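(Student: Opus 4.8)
The plan is to follow the template of the preceding corollary (the fixed step-size case) essentially verbatim; the only new ingredient is that the arithmetic progression $6\sigma^{2}\eta^{2}t$ appearing there is replaced by the partial harmonic sums. Write $H_{k}=\sum_{i=1}^{k}\frac{1}{i}$ (with $H_{0}=0$), set $C=6\sigma^{2}\eta^{2}H_{T}$ so that $w_{T+1}=\frac{1}{12\sigma^{2}\eta^{2}H_{T}}=\frac{1}{2C}$, and for $1\le t\le T$ define $w_{t}$ so that the first condition of Corollary \ref{cor:md-convergence} holds with equality, namely $w_{t}=w_{t+1}+6\sigma^{2}\eta_{t}^{2}w_{t+1}^{2}$; that condition is then automatic. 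The crux is the claim that $w_{t}\le\frac{1}{C+6\sigma^{2}\eta^{2}H_{t-1}}$ for every $1\le t\le T+1$, which I would prove by backward induction: the base case $t=T+1$ is the definition of $w_{T+1}$, and for the inductive step one substitutes the hypothesis into the recursion and uses the identity $\left(C+6\sigma^{2}\eta^{2}H_{t}\right)-\left(C+6\sigma^{2}\eta^{2}H_{t-1}\right)=\frac{6\sigma^{2}\eta^{2}}{t}=6\sigma^{2}\eta_{t}^{2}$, exactly as in the fixed-step proof, to telescope $\frac{6\sigma^{2}\eta_{t}^{2}}{\left(C+6\sigma^{2}\eta^{2}H_{t}\right)^{2}}\le\frac{1}{C+6\sigma^{2}\eta^{2}H_{t-1}}-\frac{1}{C+6\sigma^{2}\eta^{2}H_{t}}$.

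From the claim, together with the fact that $\left\{w_{t}\right\}$ is non-increasing in $t$ (the recursion only adds a nonnegative quantity), I get $\frac{1}{2C}=w_{T+1}\le w_{t}\le\frac{1}{C}$ for all $t$. The second condition of Corollary \ref{cor:md-convergence} then follows since $w_{t+1}\eta_{t}^{2}\le\frac{\eta^{2}}{Ct}=\frac{1}{6\sigma^{2}H_{T}t}\le\frac{1}{6\sigma^{2}}\le\frac{1}{4\sigma^{2}}$, using $H_{T}\ge1$ and $t\ge1$. This establishes that $\left\{w_{t}\right\}$ satisfies the hypotheses of Corollary \ref{cor:md-convergence}.

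Next I would invoke Corollary \ref{cor:md-convergence}. On the left-hand side, bound $w_{t+1}\ge\frac{1}{2C}$ and use $\eta_{t}=\frac{\eta}{\sqrt{t}}\ge\frac{\eta}{\sqrt{T}}$ together with $f(x_{t})\ge f(x^{*})$ (since $x^{*}$ minimizes $f$ over $\dom$) to get $\sum_{t=1}^{T}w_{t+1}\eta_{t}\left(f(x_{t})-f(x^{*})\right)\ge\frac{\eta}{2C\sqrt{T}}\sum_{t=1}^{T}\left(f(x_{t})-f(x^{*})\right)$, while $w_{T+1}\breg\left(x^{*},x_{T+1}\right)=\frac{1}{2C}\breg\left(x^{*},x_{T+1}\right)$. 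On the right-hand side, bound $w_{1}\le\frac{1}{C}$ and $\sum_{t=1}^{T}w_{t+1}\eta_{t}^{2}\le\frac{1}{C}\sum_{t=1}^{T}\eta_{t}^{2}=\frac{\eta^{2}H_{T}}{C}=\frac{1}{6\sigma^{2}}$. Multiplying the resulting inequality through by $2C=12\sigma^{2}\eta^{2}H_{T}$ yields $\frac{\eta}{\sqrt{T}}\sum_{t=1}^{T}\left(f(x_{t})-f(x^{*})\right)+\breg\left(x^{*},x_{T+1}\right)\le2\breg\left(x^{*},x_{1}\right)+\eta^{2}H_{T}\left(2G^{2}+6\sigma^{2}+12\sigma^{2}\ln\left(\frac{1}{\delta}\right)\right)$. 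Finally, substituting $H_{T}\le1+\ln T=O(\ln T)$, discarding the nonnegative term $\breg\left(x^{*},x_{T+1}\right)$ (respectively the function-value sum) as appropriate, and dividing by $\frac{\eta T}{\sqrt{T}}=\eta\sqrt{T}$ gives the two displayed bounds.

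The entire argument is routine once the fixed-step corollary is in hand; the only steps requiring genuine care are the telescoping induction with the harmonic partial sums, and the observation that replacing $\eta_{t}$ by its minimum $\frac{\eta}{\sqrt{T}}$ in the left-hand sum is legitimate precisely because each term $f(x_{t})-f(x^{*})$ is nonnegative. Beyond that, the main bookkeeping difference from the fixed-step case is the extra $\ln T$ factor, which enters through $\sum_{t=1}^{T}\eta_{t}^{2}=\eta^{2}H_{T}=\Theta\left(\eta^{2}\ln T\right)$ in place of $\eta^{2}T$.
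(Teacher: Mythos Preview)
Your proposal is correct and follows essentially the same approach as the paper: the paper likewise sets $B_{t}=6\sigma^{2}\sum_{i=1}^{t-1}\eta_{i}^{2}=6\sigma^{2}\eta^{2}H_{t-1}$ and $C=B_{T+1}$, proves by backward induction that $w_{t}\le\frac{1}{C+B_{t}}$ via the same telescoping identity, verifies the second condition exactly as you do, and then applies Corollary~\ref{cor:md-convergence} with the bounds $\frac{1}{2C}\le w_{t}\le\frac{1}{C}$ and $\eta_{t}\ge\eta_{T}=\frac{\eta}{\sqrt{T}}$. Your explicit remark that replacing $\eta_{t}$ by $\frac{\eta}{\sqrt{T}}$ on the left is justified because $f(x_{t})-f(x^{*})\ge0$ is a point the paper leaves implicit, but otherwise the arguments coincide.
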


\begin{proof}
Recall from Corollary \ref{cor:md-convergence} that the sequence
$\left\{ w_{t}\right\} $ needs to satisfy the following conditions
for all $1\leq t\leq T$:
\begin{align*}
w_{t+1}+6\sigma^{2}\eta_{t}^{2} & w_{t+1}^{2}\leq w_{t}\\
w_{t+1}\eta_{t}^{2} & \leq\frac{1}{4\sigma^{2}}
\end{align*}
Let $B_{t}=6\sigma^{2}\sum_{i=1}^{t-1}\eta_{i}^{2}$ and $C=B_{T+1}=6\sigma^{2}\eta^{2}\left(\sum_{t=1}^{T}\frac{1}{t}\right)$.
We set $w_{T+1}=\frac{1}{C+B_{T+1}}$. For $1\leq t\leq T$, we set
$w_{t}$ so that the first condition holds with equality
\[
w_{t}=w_{t+1}+6\sigma^{2}\eta_{t}^{2}w_{t+1}^{2}
\]
We can show by induction that, for every $1\leq t\leq T+1$, we have
\[
w_{t}\leq\frac{1}{C+B_{t}}
\]
The base case $t=T+1$ follows from the definition of $w_{T+1}$.
Consider $1\leq t\le T$. Using the definition of $w_{t}$ and the
inductive hypothesis, we obtain 
\begin{align*}
w_{t} & =w_{t+1}+6\sigma^{2}\eta_{t}^{2}w_{t+1}^{2}\\
 & \leq\frac{1}{C+B_{t+1}}+\frac{6\sigma^{2}\eta_{t}^{2}}{\left(C+B_{t+1}\right)^{2}}\\
 & \le\frac{1}{C+B_{t+1}}+\frac{\left(C+B_{t+1}\right)-\left(C+B_{t}\right)}{\left(C+B_{t+1}\right)\left(C+B_{t}\right)}\\
 & =\frac{1}{C+B_{t+1}}
\end{align*}
as needed.

Using this fact, we now show that $\left\{ w_{t}\right\} $ satisfies
the second condition. For every $1\leq t\leq T$, we have
\[
w_{t+1}\eta_{t}^{2}\leq\frac{\eta_{t}^{2}}{C}=\frac{1}{t\left(6\sigma^{2}\sum_{t=1}^{T}\frac{1}{t}\right)}\leq\frac{1}{6\sigma^{2}}
\]
as needed.

Thus, by Corollary \ref{cor:md-convergence}, with probability $\geq1-\delta$,
we have
\begin{align*}
\sum_{t=1}^{T}w_{t+1}\eta_{t}\left(f\left(x_{t}\right)-f\left(x^{*}\right)\right)+w_{T+1}\breg\left(x^{*},x_{T+1}\right) & \leq w_{1}\breg\left(x^{*},x_{1}\right)+\left(G^{2}+3\sigma^{2}\right)\sum_{t=1}^{T}w_{t+1}\eta_{t}^{2}+\ln\left(\frac{1}{\delta}\right)
\end{align*}
Note that $w_{T+1}=\frac{1}{2C}$ and $\frac{1}{2C}\leq w_{t}\leq\frac{1}{C}$
for all $1\leq t\leq T+1$. Thus we obtain
\[
\frac{1}{2C}\eta_{T}\sum_{t=1}^{T}\left(f\left(x_{t}\right)-f\left(x^{*}\right)\right)+\frac{1}{2C}\breg\left(x^{*},x_{T+1}\right)\leq\frac{1}{C}\breg\left(x^{*},x_{1}\right)+\left(G^{2}+3\sigma^{2}\right)\frac{1}{C}\sum_{t=1}^{T}\eta_{t}^{2}+\ln\left(\frac{1}{\delta}\right)
\]
Plugging in $\eta_{t}=\frac{\eta}{\sqrt{t}}$ and simplifying, we
obtain
\begin{align*}
\frac{\eta}{\sqrt{T}}\sum_{t=1}^{T}\left(f\left(x_{t}\right)-f\left(x^{*}\right)\right)+\breg\left(x^{*},x_{T+1}\right) & \leq2\breg\left(x^{*},x_{1}\right)+\left(2G^{2}+6\sigma^{2}\right)\eta^{2}\left(\sum_{t=1}^{T}\frac{1}{t}\right)+2C\ln\left(\frac{1}{\delta}\right)\\
 & =2\breg\left(x^{*},x_{1}\right)+\left(2G^{2}+6\sigma^{2}\left(1+2\ln\left(\frac{1}{\delta}\right)\right)\right)\eta^{2}\left(\sum_{t=1}^{T}\frac{1}{t}\right)
\end{align*}
Thus we have
\[
\frac{1}{T}\sum_{t=1}^{T}\left(f\left(x_{t}\right)-f\left(x^{*}\right)\right)\leq\frac{1}{\sqrt{T}}\left(\frac{2\breg\left(x^{*},x_{1}\right)}{\eta}+\left(2G^{2}+6\sigma^{2}\left(1+2\ln\left(\frac{1}{\delta}\right)\right)\right)\eta\left(\sum_{t=1}^{T}\frac{1}{t}\right)\right)
\]
and
\[
\breg\left(x^{*},x_{T+1}\right)\leq2\breg\left(x^{*},x_{1}\right)+\left(2G^{2}+6\sigma^{2}\left(1+2\ln\left(\frac{1}{\delta}\right)\right)\right)\eta^{2}\left(\sum_{t=1}^{T}\frac{1}{t}\right)
\]
\end{proof}

\section{Analysis of Accelerated Stochastic Mirror Descent}

\begin{algorithm}
\caption{Accelerated Stochastic Mirror Descent Algorithm \cite{lan2020first}.
$\psi\colon\protect\R^{d}\to\protect\R$ is a strongly convex mirror
map. $\protect\breg\left(x,y\right)=\psi\left(x\right)-\psi\left(y\right)-\left\langle \nabla\psi\left(y\right),x-y\right\rangle $
is the Bregman divergence of $\psi$.}

\label{alg:acc-md}

\textbf{Parameters:} initial point $x_{0}=y_{0}=z_{0}$, step size
$\eta$

Set $\alpha_{t}=\frac{2}{t+1}$, $\eta_{t}=t\eta$

for $t=1$ to $T$:

$\quad$$x_{t}=\left(1-\alpha_{t}\right)y_{t-1}+\alpha_{t}z_{t-1}$

$\quad$$z_{t}=\arg\min_{x\in\dom}\left(\eta_{t}\left\langle \widehat{\nabla}f(x_{t}),x\right\rangle +\breg\left(x,z_{t-1}\right)\right)$

$\quad$$y_{t}=\left(1-\alpha_{t}\right)y_{t-1}+\alpha_{t}z_{t}$

return $y_{T}$
\end{algorithm}

In this section, we analyze the Accelerated Stochastic Mirror Descent
Algorithm (Algorithm \eqref{alg:acc-md}). We assume that $f$ satisfies
the following condition:

\[
f(y)\le f(x)+\left\langle \nabla f\left(x\right),y-x\right\rangle +G\left\Vert y-x\right\Vert +\frac{\beta}{2}\left\Vert y-x\right\Vert ^{2}\ \forall x,y\in\dom
\]

$\beta$-smooth functions, $G$-Lipschitz functions, and their sums
all satisfy the above conditions.

As before, we define
\[
\xi_{t}:=\widehat{\nabla}f\left(x_{t}\right)-\nabla f\left(x_{t}\right)
\]
We let $\F_{t}=\sigma\left(\xi_{1},\dots,\xi_{t-1}\right)$ denote
the natural filtration. Note that $x_{t}$ is $\F_{t}$-measurable
and $z_{t}$ and $y_{t}$ are $\F_{t+1}$-measurable.

We follow a similar analysis to the previous section. As before, we
start with the inequalities shown in the standard analysis of the
algorithm, and we combine them using coefficients $\left\{ w_{t}\right\} _{1\leq t\leq T}$.
The following lemma follows from the analysis given in \cite{lan2020first}
and we include the proof in the Appendix for completeness.
\begin{lem}
(\cite{lan2020first}) \label{lem:acc-md-basic-analysis}For every
iteration $t$, we have
\begin{align*}
 & \frac{\eta_{t}}{\alpha_{t}}\left(f\left(y_{t}\right)-f\left(x^{*}\right)\right)-\frac{\eta_{t}}{\alpha_{t}}\left(1-\alpha_{t}\right)\left(f\left(y_{t-1}\right)-f\left(x^{*}\right)\right)-\frac{\eta_{t}^{2}}{1-\beta\alpha_{t}\eta_{t}}G^{2}+\breg\left(x^{*},z_{t}\right)-\breg\left(x^{*},z_{t-1}\right)\\
 & \leq\eta_{t}\left\langle \xi_{t},x^{*}-z_{t-1}\right\rangle +\frac{\eta_{t}^{2}}{1-\beta\alpha_{t}\eta_{t}}\left\Vert \xi_{t}\right\Vert ^{2}
\end{align*}
\end{lem}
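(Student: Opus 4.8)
The plan is to reproduce the standard deterministic accelerated mirror-descent analysis while carefully tracking the stochastic error $\xi_t$ and the extra Lipschitz term $G\|y-x\|$. First I would apply the assumed inequality with $x=x_t$ and $y=y_t$,
\[
f(y_t)\le f(x_t)+\langle\nabla f(x_t),\,y_t-x_t\rangle+G\|y_t-x_t\|+\frac{\beta}{2}\|y_t-x_t\|^2,
\]
and record the affine identities that follow from the three update rules: $y_t-x_t=\alpha_t(z_t-z_{t-1})$, $x_t-z_{t-1}=(1-\alpha_t)(y_{t-1}-z_{t-1})$, and hence $\alpha_t(x^*-z_{t-1})=\alpha_t(x^*-x_t)+(1-\alpha_t)(y_{t-1}-x_t)$.

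Next I would decompose the linear term as $\alpha_t\langle\nabla f(x_t),z_t-z_{t-1}\rangle=\alpha_t\langle\nabla f(x_t),z_t-x^*\rangle+\alpha_t\langle\nabla f(x_t),x^*-z_{t-1}\rangle$ and bound the second piece using convexity of $f$ at $x_t$ (evaluated at $x^*$ and at $y_{t-1}$, i.e.\ $\langle\nabla f(x_t),u-x_t\rangle\le f(u)-f(x_t)$) together with the affine identity above, which gives $\alpha_t\langle\nabla f(x_t),x^*-z_{t-1}\rangle\le\alpha_t f(x^*)+(1-\alpha_t)f(y_{t-1})-f(x_t)$. Substituting this cancels the $f(x_t)$ terms and produces the telescoping-ready form
\[
f(y_t)-f(x^*)\le(1-\alpha_t)\big(f(y_{t-1})-f(x^*)\big)+\alpha_t\langle\nabla f(x_t),z_t-x^*\rangle+G\alpha_t\|z_t-z_{t-1}\|+\frac{\beta\alpha_t^2}{2}\|z_t-z_{t-1}\|^2.
\]

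Then I would bring in the stochastic update. Writing $\nabla f(x_t)=\widehat\nabla f(x_t)-\xi_t$, I would bound $\alpha_t\langle\widehat\nabla f(x_t),z_t-x^*\rangle$ via the first-order optimality condition of the proximal step with comparison point $x^*$: through the three-point identity $\langle\nabla\psi(z_t)-\nabla\psi(z_{t-1}),u-z_t\rangle=\breg(u,z_{t-1})-\breg(u,z_t)-\breg(z_t,z_{t-1})$ this yields $\eta_t\langle\widehat\nabla f(x_t),z_t-x^*\rangle\le\breg(x^*,z_{t-1})-\breg(x^*,z_t)-\breg(z_t,z_{t-1})$. Multiplying the inequality through by $\eta_t/\alpha_t$ and moving the function-value and Bregman differences to the left puts the statement's left-hand side (apart from the $G^2$ term) in place. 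For the noise contribution I would split $\langle\xi_t,x^*-z_t\rangle=\langle\xi_t,x^*-z_{t-1}\rangle+\langle\xi_t,z_{t-1}-z_t\rangle$, keeping the first as the desired right-hand side term and bounding the second by $\|\xi_t\|\,\|z_t-z_{t-1}\|$ with Cauchy--Schwarz.

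Finally I would absorb the remaining $\|z_t-z_{t-1}\|$ terms. Using $\breg(z_t,z_{t-1})\ge\frac12\|z_t-z_{t-1}\|^2$ from strong convexity of $\psi$, and the standing smallness condition $\beta\alpha_t\eta_t<1$ on the step size (which holds for $\alpha_t=\frac{2}{t+1},\ \eta_t=t\eta$ once $\eta$ is small enough), the leftover terms combine into $\eta_t(\|\xi_t\|+G)s-\frac{1-\beta\alpha_t\eta_t}{2}s^2$ with $s:=\|z_t-z_{t-1}\|$; the scalar bound $as-\frac{c}{2}s^2\le\frac{a^2}{2c}$ followed by $(\|\xi_t\|+G)^2\le 2\|\xi_t\|^2+2G^2$ produces exactly $\frac{\eta_t^2}{1-\beta\alpha_t\eta_t}\big(\|\xi_t\|^2+G^2\big)$, and moving the $G^2$ part back to the left gives the claim. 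I expect the main obstacle to be the sign-sensitive bookkeeping in the second step — choosing the decomposition of $\langle\nabla f(x_t),x^*-z_{t-1}\rangle$ so that the $f(x_t)$ terms cancel and the $(1-\alpha_t)$-weighted previous gap emerges — together with keeping track of the coefficient $1-\beta\alpha_t\eta_t$ so that it stays positive and the square can be completed.
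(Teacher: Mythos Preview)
Your proposal is correct and follows essentially the same route as the paper's proof: apply the assumed upper bound at $(x_t,y_t)$, use the affine relations among $x_t,y_t,z_t,z_{t-1}$ together with convexity at $x_t$ to produce the $(1-\alpha_t)$-weighted previous gap and the term $\alpha_t\langle\nabla f(x_t),z_t-x^*\rangle$, replace $\nabla f(x_t)$ by $\widehat\nabla f(x_t)-\xi_t$ and invoke the optimality condition for $z_t$ via the three-point identity, split $\langle\xi_t,x^*-z_t\rangle$ through $z_{t-1}$, and finish by $\breg(z_t,z_{t-1})\ge\tfrac12\|z_t-z_{t-1}\|^2$, completing the square, and $(\|\xi_t\|+G)^2\le 2\|\xi_t\|^2+2G^2$. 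The only cosmetic difference is that the paper routes the convexity step through $y_{t-1}$ (writing $\langle\nabla f(x_t),y_t-x_t\rangle=\langle\nabla f(x_t),y_{t-1}-x_t\rangle+\langle\nabla f(x_t),y_t-y_{t-1}\rangle$) whereas you route it through the identity $\alpha_t(x^*-z_{t-1})=\alpha_t(x^*-x_t)+(1-\alpha_t)(y_{t-1}-x_t)$; these are algebraically equivalent rearrangements of the same argument.
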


We now turn our attention to our main concentration argument. Towards
our goal of obtaining a high-probability convergence rate, we analyze
the moment generating function for a random variable that is closely
related to the left-hand side of the inequality above. We let $w_{0}\geq w_{1}\geq w_{2}\geq\dots\geq w_{T}\geq0$
be a non-increasing sequence where $w_{t}\in\R$ for all $t$. We
define
\begin{align*}
Z_{t} & =w_{t}\left(\frac{\eta_{t}}{\alpha_{t}}\left(f\left(y_{t}\right)-f\left(x^{*}\right)\right)-\frac{\eta_{t}\left(1-\alpha_{t}\right)}{\alpha_{t}}\left(f\left(y_{t-1}\right)-f\left(x^{*}\right)\right)-\frac{\eta_{t}^{2}G^{2}}{1-\beta\alpha_{t}\eta_{t}}\right)\\
 & \quad+w_{T}\left(\breg\left(x^{*},z_{t}\right)-\breg\left(x^{*},z_{t-1}\right)\right) & \forall\,1\leq t\leq T\\
S_{t} & =\sum_{i=t}^{T}Z_{i} & \forall\,1\le t\leq T+1
\end{align*}

\begin{thm}
\label{thm:acc-md-concentration-subgaussian}Suppose that $w_{t-1}\geq w_{t}+6\sigma^{2}\eta_{t}^{2}w_{t}^{2}$
for every $1\leq t\leq T$ and $\frac{w_{t}\eta_{t}^{2}}{1-\beta\alpha_{t}\eta_{t}}\leq\frac{1}{4\sigma^{2}}$
for every $0\leq t\leq T$. For every $1\leq t\leq T+1$, we have
\[
\E\left[\exp\left(S_{t}\right)\vert\F_{t}\right]\leq\exp\left(\left(w_{t-1}-w_{T}\right)\breg\left(x^{*},z_{t-1}\right)+3\sigma^{2}\sum_{i=t}^{T}w_{i}\frac{\eta_{i}^{2}}{1-\beta\alpha_{i}\eta_{i}}\right)
\]
\end{thm}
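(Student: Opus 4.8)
The plan is to mirror the proof of Theorem~\ref{thm:md-concentration-subgaussian} essentially verbatim, proceeding by downward induction on $t$ from $t=T+1$ to $t=1$. For the base case $t=T+1$ both sides equal $1$: we have $S_{T+1}=0$, the prefactor $(w_{T}-w_{T})\breg(x^{*},z_{T})$ vanishes, and the sum $\sum_{i=T+1}^{T}$ is empty.

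For the inductive step with $1\le t\le T$, I would first invoke the tower property to write $\E[\exp(S_{t})\mid\F_{t}]=\E[\E[\exp(Z_{t}+S_{t+1})\mid\F_{t+1}]\mid\F_{t}]$. The relevant measurability facts are that $x_{t},z_{t-1},y_{t-1}$ are $\F_{t}$-measurable while $z_{t},y_{t}$ are $\F_{t+1}$-measurable, so $Z_{t}$ is $\F_{t+1}$-measurable and factors out of the inner expectation, and the inductive hypothesis applies to $\E[\exp(S_{t+1})\mid\F_{t+1}]$. Then, writing $X_{t}:=\eta_{t}\langle\xi_{t},x^{*}-z_{t-1}\rangle$, I would plug Lemma~\ref{lem:acc-md-basic-analysis} into the definition of $Z_{t}$; the divergence increments $\breg(x^{*},z_{t})-\breg(x^{*},z_{t-1})$ appear with net coefficient $w_{t}-w_{T}$ and telescope against the $\breg(x^{*},z_{t})$ term coming from the inductive hypothesis, leaving $(w_{t}-w_{T})\breg(x^{*},z_{t-1})$, a term $w_{t}X_{t}$, and an error term $w_{t}\frac{\eta_{t}^{2}}{1-\beta\alpha_{t}\eta_{t}}\|\xi_{t}\|^{2}$.

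The heart of the argument is then to bound $\E[\exp(w_{t}X_{t}+w_{t}\frac{\eta_{t}^{2}}{1-\beta\alpha_{t}\eta_{t}}\|\xi_{t}\|^{2})\mid\F_{t}]$. I would Taylor-expand the exponential, use $\E[X_{t}\mid\F_{t}]=0$ (valid since $z_{t-1}$ is $\F_{t}$-measurable and $\xi_{t}$ is conditionally mean zero) to discard the first-order contribution, bound $X_{t}\le\eta_{t}\|\xi_{t}\|\,\|x^{*}-z_{t-1}\|$ via Cauchy--Schwarz inside the higher-order terms, and apply Lemma~\ref{lem:helper-taylor} with $X=\|\xi_{t}\|$, $a=w_{t}\eta_{t}\|x^{*}-z_{t-1}\|$, and $b^{2}=\frac{w_{t}\eta_{t}^{2}}{1-\beta\alpha_{t}\eta_{t}}$; the second hypothesis $\frac{w_{t}\eta_{t}^{2}}{1-\beta\alpha_{t}\eta_{t}}\le\frac{1}{4\sigma^{2}}$ is exactly what meets the requirement $b\le\frac{1}{2\sigma}$. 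This produces the bound $\exp\bigl(3\sigma^{2}(w_{t}^{2}\eta_{t}^{2}\|x^{*}-z_{t-1}\|^{2}+\frac{w_{t}\eta_{t}^{2}}{1-\beta\alpha_{t}\eta_{t}})\bigr)$, and $\breg(x^{*},z_{t-1})\ge\frac{1}{2}\|x^{*}-z_{t-1}\|^{2}$ (strong convexity of $\psi$) turns the first summand into $6\sigma^{2}w_{t}^{2}\eta_{t}^{2}\breg(x^{*},z_{t-1})$. Collecting terms, the coefficient of $\breg(x^{*},z_{t-1})$ is $w_{t}+6\sigma^{2}\eta_{t}^{2}w_{t}^{2}-w_{T}\le w_{t-1}-w_{T}$ by the first hypothesis, and the error sums merge into $3\sigma^{2}\sum_{i=t}^{T}w_{i}\frac{\eta_{i}^{2}}{1-\beta\alpha_{i}\eta_{i}}$, closing the induction.

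I do not expect a genuinely new obstacle relative to Theorem~\ref{thm:md-concentration-subgaussian}; the points needing care are all bookkeeping: (i) the shifted weight indexing ($Z_{t}$ carries $w_{t}$, not $w_{t+1}$, and the telescoping divergence coefficient is $w_{T}$); (ii) the fact that it is $z_{t-1}$, not $x_{t}$, that appears in both $X_{t}$ and the Bregman term, which is what makes the conditional-mean-zero step and the telescoping legitimate; and (iii) carrying $\frac{\eta_{t}^{2}}{1-\beta\alpha_{t}\eta_{t}}$ through everywhere in place of the bare $\eta_{t}^{2}$ of the non-accelerated proof, which only changes which quantity must be at most $\frac{1}{4\sigma^{2}}$ and the constant bookkeeping in the final error sum.
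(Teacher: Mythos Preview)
Your proposal is correct and follows essentially the same approach as the paper's own proof: downward induction, tower property, apply Lemma~\ref{lem:acc-md-basic-analysis} to bound $Z_{t}$, then Taylor-expand and invoke Lemma~\ref{lem:helper-taylor} with $a=w_{t}\eta_{t}\|x^{*}-z_{t-1}\|$ and $b^{2}=\frac{w_{t}\eta_{t}^{2}}{1-\beta\alpha_{t}\eta_{t}}$, finishing with strong convexity of $\psi$ and the hypothesis $w_{t-1}\ge w_{t}+6\sigma^{2}\eta_{t}^{2}w_{t}^{2}$. The bookkeeping points you flag (index shift, role of $z_{t-1}$, and the $\frac{\eta_{t}^{2}}{1-\beta\alpha_{t}\eta_{t}}$ factor) are exactly the ones that differ from Theorem~\ref{thm:md-concentration-subgaussian}.
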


\begin{proof}
We proceed by induction on $t$. Consider the base case $t=T+1$.
We have $S_{t}=0$ and $w_{t-1}-w_{T}=0$, and the inequality follows.
Next, we consider $t\leq T$. We have
\begin{align}
\E\left[\exp\left(S_{t}\right)\vert\F_{t}\right] & =\E\left[\exp\left(Z_{t}+S_{t+1}\right)\vert\F_{t}\right]=\E\left[\E\left[\exp\left(Z_{t}+S_{t+1}\right)\vert\F_{t+1}\right]\vert\F_{t}\right]\label{eq:1-1}
\end{align}
We now analyze the inner expectation. Conditioned on $\F_{t+1}$,
$Z_{t}$ is fixed. Using the inductive hypothesis, we obtain
\begin{align}
\E\left[\exp\left(Z_{t}+S_{t+1}\right)\vert\F_{t+1}\right]\leq\exp\left(Z_{t}\right)\exp\left(\left(w_{t}-w_{T}\right)\breg\left(x^{*},z_{t}\right)+3\sigma^{2}\sum_{i=t+1}^{T}w_{i}\frac{\eta_{i}^{2}}{1-\beta\alpha_{i}\eta_{i}}\right)\label{eq:2-1}
\end{align}
Let $X_{t}=\eta_{t}\left\langle \xi_{t},x^{*}-z_{t-1}\right\rangle $.
By Lemma \ref{lem:acc-md-basic-analysis}, we have
\begin{align*}
 & \frac{\eta_{t}}{\alpha_{t}}\left(f\left(y_{t}\right)-f\left(x^{*}\right)\right)-\frac{\eta_{t}}{\alpha_{t}}\left(1-\alpha_{t}\right)\left(f\left(y_{t-1}\right)-f\left(x^{*}\right)\right)-\frac{\eta_{t}^{2}}{1-\beta\alpha_{t}\eta_{t}}G^{2}\\
 & \leq X_{t}+\frac{\eta_{t}^{2}}{\left(1-\beta\alpha_{t}\eta_{t}\right)}\left\Vert \xi_{t}\right\Vert ^{2}-\left(\breg\left(x^{*},z_{t}\right)-\breg\left(x^{*},z_{t-1}\right)\right)
\end{align*}
and thus
\begin{align*}
Z_{t} & \leq w_{t}X_{t}-\left(w_{t}-w_{T}\right)\left(\breg\left(x^{*},z_{t}\right)-\breg\left(x^{*},z_{t-1}\right)\right)+w_{t}\frac{\eta_{t}^{2}}{1-\beta\alpha_{t}\eta_{t}}\left\Vert \xi_{t}\right\Vert ^{2}
\end{align*}
Plugging into \eqref{eq:2-1}, we obtain
\begin{align*}
 & \E\left[\exp\left(Z_{t}+S_{t+1}\right)\vert\F_{t+1}\right]\\
 & \leq\exp\left(w_{t}X_{t}+\left(w_{t}-w_{T}\right)\breg\left(x^{*},z_{t-1}\right)+w_{t}\frac{\eta_{t}^{2}}{\left(1-\beta\alpha_{t}\eta_{t}\right)}\left\Vert \xi_{t}\right\Vert ^{2}+3\sigma^{2}\sum_{i=t+1}^{T}w_{i}\frac{\eta_{i}^{2}}{1-\beta\alpha_{i}\eta_{i}}\right)
\end{align*}
Plugging into \eqref{eq:1-1}, we obtain
\begin{align}
 & \E\left[\exp\left(S_{t}\right)\vert\F_{t}\right]\nonumber \\
 & \leq\exp\left(\left(w_{t}-w_{T}\right)\breg\left(x^{*},z_{t-1}\right)+3\sigma^{2}\sum_{i=t+1}^{T}w_{i}\frac{\eta_{i}^{2}}{1-\beta\alpha_{i}\eta_{i}}\right)\E\left[\exp\left(w_{t}X_{t}+w_{t}\frac{\eta_{t}^{2}}{1-\beta\alpha_{t}\eta_{t}}\left\Vert \xi_{t}\right\Vert ^{2}\right)\vert\F_{t}\right]\label{eq:3-1}
\end{align}
Next, we analyze the the expectation on the RHS of the above inequality.
We have
\begin{align}
 & \E\left[\exp\left(w_{t}X_{t}+w_{t}\frac{\eta_{t}^{2}}{1-\beta\alpha_{t}\eta_{t}}\left\Vert \xi_{t}\right\Vert ^{2}\right)\vert\F_{t}\right]\nonumber \\
 & =\E\left[\sum_{i=0}^{\infty}\frac{1}{i!}\left(w_{t}X_{t}+w_{t}\frac{\eta_{t}^{2}}{1-\beta\alpha_{t}\eta_{t}}\left\Vert \xi_{t}\right\Vert ^{2}\right)^{i}\vert\F_{t}\right]\nonumber \\
 & =\E\left[1+w_{t}\frac{\eta_{t}^{2}}{1-\beta\alpha_{t}\eta_{t}}\left\Vert \xi_{t}\right\Vert ^{2}+\sum_{i=2}^{\infty}\frac{1}{i!}\left(w_{t}X_{t}+w_{t}\frac{\eta_{t}^{2}}{1-\beta\alpha_{t}\eta_{t}}\left\Vert \xi_{t}\right\Vert ^{2}\right)^{i}\vert\F_{t}\right]\nonumber \\
 & \leq\E\left[1+w_{t}\frac{\eta_{t}^{2}}{1-\beta\alpha_{t}\eta_{t}}\left\Vert \xi_{t}\right\Vert ^{2}+\sum_{i=2}^{\infty}\frac{1}{i!}\left(w_{t}\eta_{t}\left\Vert x^{*}-z_{t-1}\right\Vert \left\Vert \xi_{t}\right\Vert +w_{t}\frac{\eta_{t}^{2}}{1-\beta\alpha_{t}\eta_{t}}\left\Vert \xi_{t}\right\Vert ^{2}\right)^{i}\vert\F_{t}\right]\nonumber \\
 & \leq\exp\left(3\left(w_{t}^{2}\eta_{t}^{2}\left\Vert x^{*}-z_{t-1}\right\Vert ^{2}+w_{t}\frac{\eta_{t}^{2}}{1-\beta\alpha_{t}\eta_{t}}\right)\sigma^{2}\right)\nonumber \\
 & \leq\exp\left(3\left(2w_{t}^{2}\eta_{t}^{2}\breg\left(x^{*},z_{t-1}\right)+w_{t}\frac{\eta_{t}^{2}}{1-\beta\alpha_{t}\eta_{t}}\right)\sigma^{2}\right)\label{eq:4-1}
\end{align}
On the first line we used the Taylor expansion of $e^{x}$, and on
the second line we used that $\E\left[X_{t}\vert\F_{t}\right]=0$.
On the third line, we used Cauchy-Schwartz and obtained
\[
X_{t}=\eta_{t}\left\langle \xi_{t},x^{*}-z_{t-1}\right\rangle \leq\eta_{t}\left\Vert \xi_{t}\right\Vert \left\Vert x^{*}-z_{t-1}\right\Vert 
\]
On the fourth line, we applied Lemma \ref{lem:helper-taylor} with
$X=\left\Vert \xi_{t}\right\Vert $, $a=w_{t}\eta_{t}\left\Vert x^{*}-z_{t-1}\right\Vert $,
and $b^{2}=w_{t}\frac{\eta_{t}^{2}}{1-\beta\alpha_{t}\eta_{t}}\le\frac{1}{4\sigma^{2}}$.
On the fifth line, we used that $\breg\left(x^{*},z_{t-1}\right)\geq\frac{1}{2}\left\Vert x^{*}-z_{t-1}\right\Vert ^{2}$,
which follows from the strong convexity of $\psi$.

Plugging in \eqref{eq:4-1} into \eqref{eq:3-1} and using that $w_{t-1}\geq w_{t}+6\sigma^{2}w_{t}^{2}\eta_{t}^{2}$,
we obtain
\[
\E\left[\exp\left(S_{t}\right)\vert\F_{t}\right]\leq\exp\left(\left(w_{t}+6\sigma^{2}w_{t}^{2}\eta_{t}^{2}-w_{T}\right)\breg\left(x^{*},z_{t-1}\right)+3\sigma^{2}\sum_{i=t}^{T}w_{i}\frac{\eta_{i}^{2}}{1-\beta\alpha_{i}\eta_{i}}\right)
\]
as needed.
\end{proof}

Theorem \ref{thm:acc-md-concentration-subgaussian} and Markov's inequality
gives us the following convergence guarantee.
\begin{cor}
\label{cor:acc-md-convergence}Suppose the sequence $\left\{ w_{t}\right\} $
satisfies the conditions of Theorem \ref{thm:acc-md-concentration-subgaussian}.
For any $\delta>0$, the following event holds with probability at
least $1-\delta$:
\begin{align*}
 & \sum_{t=1}^{T}w_{t}\left(\frac{\eta_{t}}{\alpha_{t}}\left(f\left(y_{t}\right)-f\left(x^{*}\right)\right)-\frac{\eta_{t}\left(1-\alpha_{t}\right)}{\alpha_{t}}\left(f\left(y_{t-1}\right)-f\left(x^{*}\right)\right)\right)+w_{T}\breg\left(x^{*},z_{T}\right)\\
 & \leq w_{0}\breg\left(x^{*},z_{0}\right)+\left(G^{2}+3\sigma^{2}\right)\sum_{t=1}^{T}w_{t}\frac{\eta_{t}^{2}}{1-\beta\alpha_{t}\eta_{t}}+\ln\left(\frac{1}{\delta}\right)
\end{align*}
\end{cor}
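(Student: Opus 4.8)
The plan is to follow the proof of Corollary~\ref{cor:md-convergence} verbatim, with Theorem~\ref{thm:acc-md-concentration-subgaussian} replacing Theorem~\ref{thm:md-concentration-subgaussian}; the corollary is pure bookkeeping once the concentration bound is in hand. Since $\F_{1}=\sigma(\emptyset)$ is trivial and $z_{0}=y_{0}=x_{0}$ is the deterministic initial point, instantiating Theorem~\ref{thm:acc-md-concentration-subgaussian} at $t=1$ gives
\[
\E\left[\exp\left(S_{1}\right)\right]\leq\exp\left(\left(w_{0}-w_{T}\right)\breg\left(x^{*},z_{0}\right)+3\sigma^{2}\sum_{i=1}^{T}w_{i}\frac{\eta_{i}^{2}}{1-\beta\alpha_{i}\eta_{i}}\right).
\]

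First I would set
\[
K=\left(w_{0}-w_{T}\right)\breg\left(x^{*},z_{0}\right)+3\sigma^{2}\sum_{t=1}^{T}w_{t}\frac{\eta_{t}^{2}}{1-\beta\alpha_{t}\eta_{t}}+\ln\left(\frac{1}{\delta}\right)
\]
and apply Markov's inequality to the nonnegative random variable $\exp(S_{1})$: $\Pr[S_{1}\geq K]=\Pr[\exp(S_{1})\geq\exp(K)]\leq\exp(-K)\,\E[\exp(S_{1})]\leq\delta$, so $S_{1}<K$ holds with probability at least $1-\delta$. Then I would expand $S_{1}=\sum_{t=1}^{T}Z_{t}$. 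The Bregman terms telescope because every $Z_{t}$ carries the \emph{same} coefficient $w_{T}$ on $\breg(x^{*},z_{t})-\breg(x^{*},z_{t-1})$, contributing $w_{T}\bigl(\breg(x^{*},z_{T})-\breg(x^{*},z_{0})\bigr)$; the $G^{2}$ terms sum to $-G^{2}\sum_{t=1}^{T}w_{t}\eta_{t}^{2}/(1-\beta\alpha_{t}\eta_{t})$; and the function-value terms remain the weighted sum displayed in the statement — unlike the non-accelerated case they do not collapse further, since each $Z_{t}$ already packages the accelerated potential through the $(y_{t},y_{t-1})$ pair. Substituting this expression for $S_{1}$ into $S_{1}<K$, moving $-w_{T}\breg(x^{*},z_{0})$ and the $G^{2}$-sum to the right-hand side, and using $(w_{0}-w_{T})\breg(x^{*},z_{0})+w_{T}\breg(x^{*},z_{0})=w_{0}\breg(x^{*},z_{0})$, yields exactly the claimed inequality.

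There is no real analytic obstacle: all the work lives in Theorem~\ref{thm:acc-md-concentration-subgaussian}. The only points that require a moment of care are checking that the Bregman coefficient in each $Z_{t}$ is independent of $t$ (so the telescoping is clean) and confirming the signs in the final rearrangement — in particular that the explicit error term carries $3\sigma^{2}$ rather than $6\sigma^{2}$, the extra $6\sigma^{2}$ from the $w$-recursion having been absorbed into the gap $w_{0}-w_{T}$ and hence into the $w_{0}\breg(x^{*},z_{0})$ term rather than into the sum.
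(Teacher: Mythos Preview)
Your proposal is correct and follows essentially the same approach as the paper: define the same threshold $K$, apply Markov's inequality using the moment-generating-function bound from Theorem~\ref{thm:acc-md-concentration-subgaussian} at $t=1$, then expand $S_{1}$ and telescope the Bregman terms (which share the coefficient $w_{T}$) to rearrange into the stated inequality. Your additional remarks about why the telescoping is clean and why the explicit error term carries $3\sigma^{2}$ rather than $6\sigma^{2}$ are accurate and go slightly beyond what the paper spells out.
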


\begin{proof}
Let
\[
K=\left(w_{0}-w_{T}\right)\breg\left(x^{*},z_{0}\right)+3\sigma^{2}\sum_{t=1}^{T}w_{t}\frac{\eta_{t}^{2}}{1-\beta\alpha_{t}\eta_{t}}+\ln\left(\frac{1}{\delta}\right)
\]
By Theorem \ref{thm:acc-md-concentration-subgaussian} and Markov's
inequality, we have
\begin{align*}
\Pr\left[S_{1}\geq K\right] & \leq\Pr\left[\exp\left(S_{1}\right)\geq\exp\left(K\right)\right]\\
 & \leq\exp\left(-K\right)\E\left[\exp\left(S_{1}\right)\right]\\
 & \leq\exp\left(-K\right)\exp\left(\left(w_{0}-w_{T}\right)\breg\left(x^{*},z_{0}\right)+3\sigma^{2}\sum_{t=1}^{T}w_{t}\frac{\eta_{t}^{2}}{1-\beta\alpha_{t}\eta_{t}}\right)\\
 & =\delta
\end{align*}
Note that
\begin{align*}
S_{1} & =\sum_{t=1}^{T}Z_{t}\\
 & =\sum_{t=1}^{T}w_{t}\left(\frac{\eta_{t}}{\alpha_{t}}\left(f\left(y_{t}\right)-f\left(x^{*}\right)\right)-\frac{\eta_{t}\left(1-\alpha_{t}\right)}{\alpha_{t}}\left(f\left(y_{t-1}\right)-f\left(x^{*}\right)\right)\right)\\
 & \quad-G^{2}\sum_{t=1}^{T}w_{t}\frac{\eta_{t}^{2}}{1-\beta\alpha_{t}\eta_{t}}+w_{T}\left(\breg\left(x^{*},z_{T}\right)-\breg\left(x^{*},z_{0}\right)\right)
\end{align*}
Therefore, with probability at least $1-\delta$, we have
\begin{align*}
 & \sum_{t=1}^{T}w_{t}\left(\frac{\eta_{t}}{\alpha_{t}}\left(f\left(y_{t}\right)-f\left(x^{*}\right)\right)-\frac{\eta_{t}\left(1-\alpha_{t}\right)}{\alpha_{t}}\left(f\left(y_{t-1}\right)-f\left(x^{*}\right)\right)\right)+w_{T}\breg\left(x^{*},z_{T}\right)\\
 & \leq w_{0}\breg\left(x^{*},z_{0}\right)+\left(G^{2}+3\sigma^{2}\right)\sum_{t=1}^{T}w_{t}\frac{\eta_{t}^{2}}{1-\beta\alpha_{t}\eta_{t}}+\ln\left(\frac{1}{\delta}\right)
\end{align*}
\end{proof}

With the above result in hand, we complete the convergence analysis
by showing how to define the sequence $\left\{ w_{t}\right\} $ with
the desired properties. 
\begin{cor}
\label{cor:acc-md-convergence-final}Suppose we run the Accelerated
Stochastic Mirror Descent algorithm with the standard choices $\alpha_{t}=\frac{2}{t+1}$
and $\eta_{t}=\eta t$ with $\eta\leq\frac{1}{4\beta}$. Let $w_{T}=\frac{1}{3\sigma^{2}\eta^{2}T\left(T+1\right)\left(2T+1\right)}$
and $w_{t-1}=w_{t}+6\sigma^{2}\eta_{t}^{2}w_{t}^{2}$ for all $1\leq t\leq T$.
The sequence $\left\{ w_{t}\right\} _{0\leq t\leq T}$ satisfies the
conditions required by Corollary \ref{cor:acc-md-convergence}. By
Corollary \ref{cor:acc-md-convergence}, for any $\delta>0$, the
following events hold with probability at least $1-\delta$:
\[
f\left(y_{T}\right)-f\left(x^{*}\right)\leq O\left(\frac{\breg\left(x^{*},z_{0}\right)}{\eta T^{2}}+\left(G^{2}+\left(1+\ln\left(\frac{1}{\delta}\right)\right)\sigma^{2}\right)\eta T\right)
\]
and
\begin{align*}
\breg\left(x^{*},z_{T}\right) & \leq O\left(\breg\left(x^{*},z_{0}\right)+\left(G^{2}+\left(1+\ln\left(\frac{1}{\delta}\right)\right)\sigma^{2}\right)\eta^{2}T^{3}\right)
\end{align*}
Setting $\eta=\min\left\{ \frac{1}{4\beta},\frac{\sqrt{\breg\left(x^{*},z_{0}\right)}}{\sqrt{G^{2}+\sigma^{2}\left(1+\ln\left(\frac{1}{\delta}\right)\right)}T^{3/2}}\right\} $
to balance the two terms in the first inequality gives
\[
f\left(y_{T}\right)-f\left(x^{*}\right)\leq O\left(\frac{\beta\breg\left(x^{*},z_{0}\right)}{T^{2}}+\frac{\sqrt{\breg\left(x^{*},z_{0}\right)\left(G^{2}+\left(1+\ln\left(\frac{1}{\delta}\right)\right)\sigma^{2}\right)}}{\sqrt{T}}\right)
\]
and
\[
\breg\left(x^{*},z_{T}\right)\leq O\left(\breg\left(x^{*},z_{0}\right)\right)
\]
\end{cor}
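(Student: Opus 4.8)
The plan is to instantiate Corollary~\ref{cor:acc-md-convergence} with the prescribed weights. This requires three ingredients: verifying the two hypotheses of Theorem~\ref{thm:acc-md-concentration-subgaussian}, pinning down the size of the weights $w_{t}$ to within a constant factor, and converting the weighted, almost-telescoping sum of function-value gaps on the left-hand side of Corollary~\ref{cor:acc-md-convergence} into a lower bound on the single quantity $f(y_{T})-f(x^{*})$.

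\emph{Sizing the weights.} Set $D:=6\sigma^{2}\sum_{i=1}^{T}\eta_{i}^{2}=\sigma^{2}\eta^{2}T(T+1)(2T+1)$, using $\sum_{i=1}^{T}i^{2}=\tfrac16 T(T+1)(2T+1)$; then $w_{T}=\tfrac1{3D}$. With $C:=2D$ and $B_{t}:=6\sigma^{2}\sum_{i=1}^{t}\eta_{i}^{2}$ (so $B_{0}=0$, $B_{T}=D$, and $B_{t}-B_{t-1}=6\sigma^{2}\eta_{t}^{2}$), I would prove by downward induction on $t$ that $w_{t}\le\tfrac1{C+B_{t}}$. The base case $t=T$ is the identity $w_{T}=\tfrac1{3D}=\tfrac1{C+B_{T}}$, and the inductive step is the same algebraic manipulation as in the non-accelerated analysis:
\begin{align*}
w_{t-1}=w_{t}+6\sigma^{2}\eta_{t}^{2}w_{t}^{2} &\le \frac{1}{C+B_{t}}+\frac{6\sigma^{2}\eta_{t}^{2}}{(C+B_{t})^{2}} \\
&\le \frac{1}{C+B_{t}}+\frac{B_{t}-B_{t-1}}{(C+B_{t})(C+B_{t-1})}=\frac{1}{C+B_{t-1}},
\end{align*}
where the second inequality uses $C+B_{t-1}\le C+B_{t}$. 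This gives $\tfrac1{3D}\le w_{t}\le\tfrac1{2D}$ for every $0\le t\le T$. The first hypothesis of Theorem~\ref{thm:acc-md-concentration-subgaussian} holds with equality by construction; for the second, $\beta\alpha_{t}\eta_{t}=\tfrac{2\beta\eta t}{t+1}\le 2\beta\eta\le\tfrac12$ since $\eta\le\tfrac1{4\beta}$, hence $1-\beta\alpha_{t}\eta_{t}\ge\tfrac12$ and $\tfrac{w_{t}\eta_{t}^{2}}{1-\beta\alpha_{t}\eta_{t}}\le 2w_{t}\eta_{t}^{2}\le\tfrac{\eta_{t}^{2}}{D}\le\tfrac{T^{2}}{\sigma^{2}T(T+1)(2T+1)}\le\tfrac1{4\sigma^{2}}$, the last step being $4T\le(T+1)(2T+1)$; the case $t=0$ is trivial as $\eta_{0}=0$. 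Hence the hypotheses of Corollary~\ref{cor:acc-md-convergence} hold.

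\emph{The telescoping sum.} With $\alpha_{t}=\tfrac2{t+1}$ and $\eta_{t}=\eta t$ one computes $\tfrac{\eta_{t}}{\alpha_{t}}=\tfrac{\eta t(t+1)}2=:c_{t}$ and $\tfrac{\eta_{t}(1-\alpha_{t})}{\alpha_{t}}=\tfrac{\eta t(t-1)}2=c_{t-1}$, with $c_{0}=0$. Writing $g_{t}:=f(y_{t})-f(x^{*})\ge 0$ (nonnegative because each $y_{t}$ is a convex combination of points of $\dom$, hence lies in $\dom$, and $y_{0}=z_{0}$), the first sum in Corollary~\ref{cor:acc-md-convergence} equals $\sum_{t=1}^{T}w_{t}(c_{t}g_{t}-c_{t-1}g_{t-1})$. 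Since $\{w_{t}\}$ is non-increasing and $c_{t}g_{t}\ge 0$ with $c_{0}g_{0}=0$, summation by parts gives
\begin{align*}
\sum_{t=1}^{T}w_{t}(c_{t}g_{t}-c_{t-1}g_{t-1}) &= w_{T}c_{T}g_{T}+\sum_{t=1}^{T-1}(w_{t}-w_{t+1})\,c_{t}g_{t} \\
&\ge w_{T}\cdot\frac{\eta T(T+1)}{2}\bigl(f(y_{T})-f(x^{*})\bigr).
\end{align*}
Substituting into Corollary~\ref{cor:acc-md-convergence}, dividing by $w_{T}$, and using $\tfrac{w_{0}}{w_{T}},\tfrac{w_{t}}{w_{T}}\le\tfrac32$, $\tfrac1{1-\beta\alpha_{t}\eta_{t}}\le 2$, $\sum_{t=1}^{T}\eta_{t}^{2}=\tfrac{D}{6\sigma^{2}}$ and $\tfrac1{w_{T}}=3\sigma^{2}\eta^{2}T(T+1)(2T+1)$, I obtain $\tfrac{\eta T(T+1)}2\bigl(f(y_{T})-f(x^{*})\bigr)+\breg(x^{*},z_{T})\le\tfrac32\breg(x^{*},z_{0})+\eta^{2}T(T+1)(2T+1)\cdot O\bigl(G^{2}+\sigma^{2}(1+\ln\tfrac1\delta)\bigr)$. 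Dividing by $\tfrac{\eta T(T+1)}2=\Theta(\eta T^{2})$ yields the first displayed bound, and dropping the nonnegative $f$-term yields the second.

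\emph{Optimizing the step size.} For the last two displays I would substitute $\eta=\min\{\tfrac1{4\beta},\,\sqrt{\breg(x^{*},z_{0})}/(\sqrt{G^{2}+\sigma^{2}(1+\ln\tfrac1\delta)}\,T^{3/2})\}$ and split on which term attains the minimum: in both cases $\eta T(G^{2}+\sigma^{2}(1+\ln\tfrac1\delta))\le\sqrt{\breg(x^{*},z_{0})(G^{2}+\sigma^{2}(1+\ln\tfrac1\delta))}/\sqrt T$, $\breg(x^{*},z_{0})/(\eta T^{2})\le\tfrac{4\beta\breg(x^{*},z_{0})}{T^{2}}+\sqrt{\breg(x^{*},z_{0})(G^{2}+\sigma^{2}(1+\ln\tfrac1\delta))}/\sqrt T$, and $\eta^{2}T^{3}(G^{2}+\sigma^{2}(1+\ln\tfrac1\delta))\le\breg(x^{*},z_{0})$, which give the stated rates. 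The main obstacle is the telescoping step: with non-constant weights, $\sum_{t}w_{t}(c_{t}g_{t}-c_{t-1}g_{t-1})$ no longer collapses to a single term, and the crucial point is that monotonicity of $\{w_{t}\}$ together with $g_{t}\ge 0$ still allows summation by parts to bound it below by its last term. A secondary point of care is matching constants in the weight bound so that the slack $1-\beta\alpha_{t}\eta_{t}\ge\tfrac12$ suffices to meet the $\tfrac1{4\sigma^{2}}$ threshold, which is exactly where $\eta\le\tfrac1{4\beta}$ and the identity $\sum i^{2}=\tfrac16 T(T+1)(2T+1)$ enter.
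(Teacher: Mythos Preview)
Your proposal is correct and follows essentially the same approach as the paper: you verify the two hypotheses of Theorem~\ref{thm:acc-md-concentration-subgaussian} via the same downward induction bounding $w_{t}\le 1/(C+B_{t})$, handle the almost-telescoping left-hand side via summation by parts (the paper phrases this as checking the auxiliary condition $w_{t-1}\frac{\eta_{t-1}}{\alpha_{t-1}}\ge w_{t}\frac{\eta_{t}(1-\alpha_{t})}{\alpha_{t}}$ and then dropping nonnegative terms, which is the same computation), and optimize $\eta$ identically. The only cosmetic difference is that you work with $w_{T}=\tfrac{1}{3D}$ as in the statement whereas the paper's own proof uses $w_{T}=\tfrac{1}{2D}$; your choice of $C=2D$ correctly accommodates this and the resulting constants are absorbed by the big-$O$.
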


\begin{proof}
Recall from Corollary \ref{cor:acc-md-convergence} that the sequence
$\left\{ w_{t}\right\} $ needs to satisfy the following conditions
for all $1\leq t\leq T$:
\begin{align}
w_{t}+6\sigma^{2}\eta_{t}^{2}w_{t}^{2} & \leq w_{t-1}\quad\forall1\leq t\leq T\label{eq:C1}\\
\frac{w_{t}\eta_{t}^{2}}{1-\beta\alpha_{t}\eta_{t}} & \leq\frac{1}{4\sigma^{2}}\quad\forall0\leq t\leq T\label{eq:C2}
\end{align}
We will set $\left\{ w_{t}\right\} $ so that it satisfies the following
additional condition, which will allow us to telescope the sum on
the RHS of Corollary \ref{cor:acc-md-convergence}:
\begin{equation}
w_{t-1}\frac{\eta_{t-1}}{\alpha_{t-1}}\geq w_{t}\frac{\eta_{t}\left(1-\alpha_{t}\right)}{\alpha_{t}}\quad\forall1\leq t\leq T-1\label{eq:C3}
\end{equation}
Given $w_{T}$, we set $w_{t-1}$ for every $1\leq t\leq T$ so that
the first condition \eqref{eq:C1} holds with equality:
\[
w_{t-1}=w_{t}+6\sigma^{2}\eta_{t}^{2}w_{t}^{2}=w_{t}+6\sigma^{2}\eta^{2}t^{2}w_{t}^{2}
\]
Let $C=\sigma^{2}\eta^{2}T\left(T+1\right)\left(2T+1\right)$. We
set 
\[
w_{T}=\frac{1}{C+6\sigma^{2}\eta^{2}\sum_{i=1}^{T}i^{2}}=\frac{1}{C+\sigma^{2}\eta^{2}T\left(T+1\right)\left(2T+1\right)}=\frac{1}{2\sigma^{2}\eta^{2}T\left(T+1\right)\left(2T+1\right)}
\]
Given this choice for $w_{T}$, we now verify that, for all $0\leq t\leq T$,
we have
\[
w_{t}\leq\frac{1}{C+6\sigma^{2}\eta^{2}\sum_{i=1}^{t}i^{2}}=\frac{1}{C+\sigma^{2}\eta^{2}t\left(t+1\right)\left(2t+1\right)}
\]
We proceed by induction on $t$. The base case $t=T$ follows from
the definition of $w_{T}$. Consider $t<T$. Using the definition
of $w_{t-1}$ and the inductive hypothesis, we obtain
\begin{align*}
w_{t-1} & =w_{t}+6\sigma^{2}\eta^{2}t^{2}w_{t}^{2}\\
 & \leq\frac{1}{C+6\sigma^{2}\eta^{2}\sum_{i=1}^{t}i^{2}}+\frac{6\sigma^{2}\eta^{2}t^{2}}{\left(C+6\sigma^{2}\eta^{2}\sum_{i=1}^{t}i^{2}\right)^{2}}\\
 & \leq\frac{1}{C+6\sigma^{2}\eta^{2}\sum_{i=1}^{t}i^{2}}+\frac{\left(C+6\sigma^{2}\eta^{2}\sum_{i=1}^{t}i^{2}\right)-\left(C+6\sigma^{2}\eta^{2}\sum_{i=1}^{t-1}i^{2}\right)}{\left(C+6\sigma^{2}\eta^{2}\sum_{i=1}^{t}i^{2}\right)\left(C+6\sigma^{2}\eta^{2}\sum_{i=1}^{t-1}i^{2}\right)}\\
 & =\frac{1}{C+6\sigma^{2}\eta^{2}\sum_{i=1}^{t-1}i^{2}}
\end{align*}
as needed.

Let us now verify that the second condition \eqref{eq:C2} also holds.
Using that $\frac{2t}{t+1}\leq2$, $\beta\eta\leq\frac{1}{4}$, and
$T\geq2$, we obtain
\[
\frac{w_{t}\eta_{t}^{2}}{1-\beta\alpha_{t}\eta_{t}}=\frac{w_{t}\eta^{2}t^{2}}{1-\beta\eta\frac{2t}{t+1}}\leq2w_{t}\eta^{2}t^{2}\leq\frac{2\eta^{2}t^{2}}{C}=\frac{t^{2}}{\sigma^{2}T\left(T+1\right)\left(2T+1\right)}\leq\frac{1}{\sigma^{2}\left(2T+1\right)}\leq\frac{1}{4\sigma^{2}}
\]
as needed.

Let us now verify that the third condition \eqref{eq:C3} also holds.
Since $\eta_{t}=\eta t$ and $\alpha_{t}=\frac{2}{t+1}$, we have
$\frac{\eta_{t-1}}{\alpha_{t-1}}=\frac{\eta_{t}\left(1-\alpha_{t}\right)}{\alpha_{t}}=\frac{\eta t\left(t-1\right)}{2}$.
Since $w_{t}\leq w_{t-1}$, it follows that condition \eqref{eq:C3}
holds.

We now turn our attention to the convergence. By Corollary \ref{cor:acc-md-convergence},
with probability $\geq1-\delta$, we have
\begin{align*}
 & \sum_{t=1}^{T}w_{t}\left(\frac{\eta_{t}}{\alpha_{t}}\left(f\left(y_{t}\right)-f\left(x^{*}\right)\right)-\frac{\eta_{t}\left(1-\alpha_{t}\right)}{\alpha_{t}}\left(f\left(y_{t-1}\right)-f\left(x^{*}\right)\right)\right)+w_{T}\breg\left(x^{*},z_{T}\right)\\
 & \leq w_{0}\breg\left(x^{*},z_{0}\right)+\left(G^{2}+3\sigma^{2}\right)\sum_{t=1}^{T}w_{t}\frac{\eta_{t}^{2}}{1-\beta\alpha_{t}\eta_{t}}+\ln\left(\frac{1}{\delta}\right)
\end{align*}
Grouping terms on the LHS and using that $\alpha_{1}=1$, we obtain
\begin{align*}
 & \sum_{t=1}^{T-1}\left(w_{t}\frac{\eta_{t}}{\alpha_{t}}-w_{t+1}\frac{\eta_{t+1}\left(1-\alpha_{t+1}\right)}{\alpha_{t+1}}\right)\left(f\left(y_{t}\right)-f\left(x^{*}\right)\right)+w_{T}\frac{\eta_{T}}{\alpha_{T}}\left(f\left(y_{T}\right)-f\left(x^{*}\right)\right)+w_{T}\breg\left(x^{*},z_{T}\right)\\
 & \leq w_{0}\breg\left(x^{*},z_{0}\right)+\left(G^{2}+3\sigma^{2}\right)\sum_{t=1}^{T}w_{t}\frac{\eta_{t}^{2}}{1-\beta\alpha_{t}\eta_{t}}+\ln\left(\frac{1}{\delta}\right)
\end{align*}
Since $\left\{ w_{t}\right\} $ satisfies condition \eqref{eq:C3},
the coefficient of $f\left(y_{t}\right)-f\left(x^{*}\right)$ is non-negative
and thus we can drop the above sum. We obtain
\begin{align*}
w_{T}\frac{\eta_{T}}{\alpha_{T}}\left(f\left(y_{T}\right)-f\left(x^{*}\right)\right)+w_{T}\breg\left(x^{*},z_{T}\right) & \leq w_{0}\breg\left(x^{*},z_{0}\right)+\left(G^{2}+3\sigma^{2}\right)\sum_{t=1}^{T}w_{t}\frac{\eta_{t}^{2}}{1-\beta\alpha_{t}\eta_{t}}+\ln\left(\frac{1}{\delta}\right)
\end{align*}
 Using that $w_{T}=\frac{1}{2C}$ and $w_{t}\leq\frac{1}{C}$ for
all $0\leq t\leq T-1$, we obtain
\begin{align*}
 & \frac{1}{2C}\frac{\eta_{T}}{\alpha_{T}}\left(f\left(y_{T}\right)-f\left(x^{*}\right)\right)+\frac{1}{2C}\breg\left(x^{*},z_{T}\right)\\
 & \leq\frac{1}{C}\breg\left(x^{*},z_{0}\right)+\frac{1}{C}\left(G^{2}+3\sigma^{2}\right)\sum_{t=1}^{T}\frac{\eta_{t}^{2}}{1-\beta\alpha_{t}\eta_{t}}+\ln\left(\frac{1}{\delta}\right)
\end{align*}
Thus
\begin{align*}
 & \frac{\eta_{T}}{\alpha_{T}}\left(f\left(y_{T}\right)-f\left(x^{*}\right)\right)+\breg\left(x^{*},z_{T}\right)\\
 & \leq2\breg\left(x^{*},z_{0}\right)+2\left(G^{2}+3\sigma^{2}\right)\sum_{t=1}^{T}\frac{\eta_{t}^{2}}{1-\beta\alpha_{t}\eta_{t}}+2C\ln\left(\frac{1}{\delta}\right)\\
 & =2\breg\left(x^{*},z_{0}\right)+2\left(G^{2}+3\sigma^{2}\right)\sum_{t=1}^{T}\frac{\eta_{t}^{2}}{1-\beta\alpha_{t}\eta_{t}}+2\sigma^{2}\ln\left(\frac{1}{\delta}\right)\eta^{2}T\left(T+1\right)\left(2T+1\right)
\end{align*}
Using that $\beta\eta\leq\frac{1}{4}$ and $\frac{2t}{t+1}\leq2$,
we obtain
\[
\sum_{t=1}^{T}\frac{\eta_{t}^{2}}{1-\beta\alpha_{t}\eta_{t}}=\sum_{t=1}^{T}\frac{\eta^{2}t^{2}}{1-\beta\eta\frac{2t}{t+1}}\leq\sum_{t=1}^{T}2\eta^{2}t^{2}=\frac{1}{3}\eta^{2}T\left(T+1\right)\left(2T+1\right)
\]
Plugging in and using that $\eta_{T}=\eta T$ and $\alpha_{T}=\frac{2}{T+1}$,
we obtain
\begin{align*}
 & \eta\frac{T\left(T+1\right)}{2}\left(f\left(y_{T}\right)-f\left(x^{*}\right)\right)+\breg\left(x^{*},z_{T}\right)\\
 & \leq2\breg\left(x^{*},z_{0}\right)+\left(\frac{2}{3}G^{2}+2\left(1+\ln\left(\frac{1}{\delta}\right)\right)\sigma^{2}\right)\eta^{2}T\left(T+1\right)\left(2T+1\right)\\
 & \leq2\breg\left(x^{*},z_{0}\right)+2\left(G^{2}+\left(1+\ln\left(\frac{1}{\delta}\right)\right)\sigma^{2}\right)\eta^{2}T\left(T+1\right)\left(2T+1\right)
\end{align*}
We can further simplify the bound by lower bounding $T\left(T+1\right)\geq T^{2}$
and upper bounding $T\left(T+1\right)\left(2T+1\right)\leq6T^{3}$.
We obtain
\begin{align*}
\eta T^{2}\left(f\left(y_{T}\right)-f\left(x^{*}\right)\right)+\breg\left(x^{*},z_{T}\right) & \leq4\breg\left(x^{*},z_{0}\right)+24\left(G^{2}+\left(1+\ln\left(\frac{1}{\delta}\right)\right)\sigma^{2}\right)\eta^{2}T^{3}
\end{align*}
Thus we obtain
\[
f\left(y_{T}\right)-f\left(x^{*}\right)\leq\frac{4\breg\left(x^{*},z_{0}\right)}{\eta T^{2}}+24\left(G^{2}+\left(1+\ln\left(\frac{1}{\delta}\right)\right)\sigma^{2}\right)\eta T
\]
and
\begin{align*}
\breg\left(x^{*},z_{T}\right) & \leq2\breg\left(x^{*},z_{0}\right)+12\left(G^{2}+\left(1+\ln\left(\frac{1}{\delta}\right)\right)\sigma^{2}\right)\eta^{2}T^{3}
\end{align*}
\end{proof}

\bibliographystyle{plain}
\bibliography{references}

\appendix

\section{Omitted Proofs}

\begin{proof}
\textbf{(Lemma \ref{lem:helper-taylor})} Consider two cases either
$a\ge1/(2\sigma)$ or $a\le1/(2\sigma)$. First suppose $a\ge1/(2\sigma)$.
We use the inequality $uv\le\frac{u^{2}}{4}+v^{2}$,

\begin{align*}
\E\left[1+b^{2}X^{2}+\sum_{i=2}^{\infty}\frac{1}{i!}\left(aX+b^{2}X^{2}\right)^{i}\right] & \le\E\left[1+b^{2}X^{2}+\sum_{i=2}^{\infty}\frac{1}{i!}\left(\frac{1}{4\sigma^{2}}X^{2}+a^{2}\sigma^{2}+b^{2}X^{2}\right)^{i}\right]\\
 & =\E\left[b^{2}X^{2}+\exp\left(\left(\frac{1}{4\sigma^{2}}+b^{2}\right)X^{2}+a^{2}\sigma^{2}\right)-\left(\frac{1}{4\sigma^{2}}+b^{2}\right)X^{2}-a^{2}\sigma^{2}\right]\\
 & =\E\left[\exp\left(\left(\frac{1}{4\sigma^{2}}+b^{2}\right)X^{2}+a^{2}\sigma^{2}\right)-\frac{1}{4\sigma^{2}}X^{2}-a^{2}\sigma^{2}\right]\\
 & \le\exp\left(\left(\frac{1}{4\sigma^{2}}+b^{2}\right)\sigma^{2}+a^{2}\sigma^{2}\right)\\
 & \le\exp\left(b^{2}\sigma^{2}+2a^{2}\sigma^{2}\right)
\end{align*}

Next, let $c=\max(a,b)\le1/(2\sigma)$. We have

\begin{align*}
\E\left[1+b^{2}X^{2}+\sum_{i=2}^{\infty}\frac{1}{i!}\left(aX+b^{2}X^{2}\right)^{i}\right] & =\E\left[\exp\left(aX+b^{2}X^{2}\right)-aX\right]\\
 & \le\E\left[\left(aX+\exp\left(a^{2}X^{2}\right)\right)\exp\left(b^{2}X^{2}\right)-aX\right]\\
 & =\E\left[\exp\left(\left(a^{2}+b^{2}\right)X^{2}\right)+aX\left(\exp\left(b^{2}X^{2}\right)-1\right)\right]\\
 & \le\E\left[\exp\left(\left(a^{2}+b^{2}\right)X^{2}\right)+cX\left(\exp\left(c^{2}X^{2}\right)-1\right)\right]\\
 & \le\E\left[\exp\left(\left(a^{2}+b^{2}\right)X^{2}\right)+\exp\left(2c^{2}X^{2}\right)-1\right]\\
 & \le\E\left[\exp\left(\left(a^{2}+b^{2}+2c^{2}\right)X^{2}\right)\right]\\
 & \le\exp\left(\left(a^{2}+b^{2}+2c^{2}\right)\sigma^{2}\right)
\end{align*}

In the first inequality, we use the inequality $e^{x}-x\le e^{x^{2}}\forall x$.
In the third inequality, we use $x\left(e^{x^{2}}-1\right)\le e^{2x^{2}}-1\ \forall x$.
This inequality can be proved with the Taylor expansion.

\begin{align*}
x\left(e^{x^{2}}-1\right) & =\sum_{i=1}^{\infty}\frac{1}{i!}x^{2i+1}\\
 & \le\sum_{i=1}^{\infty}\frac{1}{i!}\frac{x^{2i}+x^{2i+2}}{2}\\
 & =\frac{x^{2}}{2}+\sum_{i=2}^{\infty}\left(\frac{1+i}{2i!}\right)x^{2i}\\
 & \le\frac{x^{2}}{2}+\sum_{i=2}^{\infty}\left(\frac{2^{i}}{i!}\right)x^{2i}\\
 & \le e^{2x^{2}}-1
\end{align*}
\end{proof}

\begin{proof}
\textbf{(Lemma \eqref{lem:md-basic-analysis})} By the optimality
condition, we have
\[
\left\langle \eta_{t}\widehat{\nabla}f(x_{t})+\nabla_{x}\breg\left(x_{t+1},x_{t}\right),x^{*}-x_{t+1}\right\rangle \ge0
\]
 and thus
\[
\left\langle \eta_{t}\widehat{\nabla}f(x_{t}),x_{t+1}-x^{*}\right\rangle \leq\left\langle \nabla_{x}\breg\left(x_{t+1},x_{t}\right),x^{*}-x_{t+1}\right\rangle 
\]
Note that 
\begin{align*}
\left\langle \nabla_{x}\breg\left(x_{t+1},x_{t}\right),x^{*}-x_{t+1}\right\rangle  & =\left\langle \nabla\psi\left(x_{t+1}\right)-\nabla\psi\left(x_{t}\right),x^{*}-x_{t+1}\right\rangle \\
 & =\breg\left(x^{*},x_{t}\right)-\breg\left(x_{t+1},x_{t}\right)-\breg\left(x^{*},x_{t+1}\right)
\end{align*}
 and thus
\begin{align*}
\eta_{t}\left\langle \widehat{\nabla}f(x_{t}),x_{t+1}-x^{*}\right\rangle  & \leq\breg\left(x^{*},x_{t}\right)-\breg\left(x^{*},x_{t+1}\right)-\breg\left(x_{t+1},x_{t}\right)\\
 & \leq\breg\left(x^{*},x_{t}\right)-\breg\left(x^{*},x_{t+1}\right)-\frac{1}{2}\left\Vert x_{t+1}-x_{t}\right\Vert ^{2}
\end{align*}
 where we have used that $\breg\left(x_{t+1},x_{t}\right)\geq\frac{1}{2}\left\Vert x_{t+1}-x_{t}\right\Vert ^{2}$
by the strong convexity of $\psi$.

By convexity,

\[
f\left(x_{t}\right)-f\left(x^{*}\right)\le\left\langle \nabla f\left(x_{t}\right),x_{t}-x^{*}\right\rangle =\left\langle \xi_{t},x^{*}-x_{t}\right\rangle +\left\langle \widehat{\nabla}f\left(x_{t}\right),x_{t}-x^{*}\right\rangle 
\]
Combining the two inequalities, we obtain
\begin{align*}
 & \eta_{t}\left(f\left(x_{t}\right)-f\left(x^{*}\right)\right)+\breg\left(x^{*},x_{t+1}\right)-\breg\left(x^{*},x_{t}\right)\\
 & \le\eta_{t}\left\langle \xi_{t},x^{*}-x_{t}\right\rangle +\eta_{t}\left\langle \widehat{\nabla}f(x_{t}),x_{t}-x_{t+1}\right\rangle -\frac{1}{2}\left\Vert x_{t+1}-x_{t}\right\Vert ^{2}\\
 & \le\eta_{t}\left\langle \xi_{t},x^{*}-x_{t}\right\rangle +\frac{\eta_{t}^{2}}{2}\left\Vert \widehat{\nabla}f(x_{t})\right\Vert ^{2}
\end{align*}
Using the triangle inequality and the bounded gradient assumption
$\left\Vert \nabla f(x)\right\Vert \leq G$ , we obtain
\[
\left\Vert \widehat{\nabla}f(x_{t})\right\Vert ^{2}=\left\Vert \xi_{t}+\nabla f(x_{t})\right\Vert ^{2}\leq2\left\Vert \xi_{t}\right\Vert ^{2}+2\left\Vert \nabla f(x_{t})\right\Vert ^{2}\leq2\left(\left\Vert \xi_{t}\right\Vert ^{2}+G^{2}\right)
\]
Thus
\[
\eta_{t}\left(f\left(x_{t}\right)-f\left(x^{*}\right)\right)+\breg\left(x^{*},x_{t+1}\right)-\breg\left(x^{*},x_{t}\right)\leq\eta_{t}\left\langle \xi_{t},x^{*}-x_{t}\right\rangle +\eta_{t}^{2}\left(\left\Vert \xi_{t}\right\Vert ^{2}+G^{2}\right)
\]
as needed.
\end{proof}

\begin{proof}
\textbf{(Lemma \ref{lem:acc-md-basic-analysis})} Starting with smoothness,
we obtain
\begin{align*}
f\left(y_{t}\right) & \le f\left(x_{t}\right)+\left\langle \nabla f\left(x_{t}\right),y_{t}-x_{t}\right\rangle +G\left\Vert y_{t}-x_{t}\right\Vert +\frac{\beta}{2}\left\Vert y_{t}-x_{t}\right\Vert ^{2}\ \forall x\in\dom\\
 & =f\left(x_{t}\right)+\left\langle \nabla f\left(x_{t}\right),y_{t-1}-x_{t}\right\rangle +\left\langle \nabla f\left(x_{t}\right),y_{t}-y_{t-1}\right\rangle +G\left\Vert y_{t}-x_{t}\right\Vert +\frac{\beta}{2}\left\Vert y_{t}-x_{t}\right\Vert ^{2}\\
 & =\left(1-\alpha_{t}\right)\underbrace{\left(f\left(x_{t}\right)+\left\langle \nabla f\left(x_{t}\right),y_{t-1}-x_{t}\right\rangle \right)}_{\text{convexity}}+\alpha_{t}\underbrace{\left(f\left(x_{t}\right)+\left\langle \nabla f\left(x_{t}\right),y_{t-1}-x_{t}\right\rangle \right)}_{\text{convexity}}\\
 & +\alpha_{t}\left\langle \nabla f\left(x_{t}\right),z_{t}-y_{t-1}\right\rangle +G\left\Vert y_{t}-x_{t}\right\Vert +\frac{\beta}{2}\left\Vert y_{t}-x_{t}\right\Vert ^{2}\\
 & \le\left(1-\alpha_{t}\right)f\left(y_{t-1}\right)+\alpha_{t}f\left(x_{t}\right)+\alpha_{t}\left\langle \nabla f\left(x_{t}\right),z_{t}-x_{t}\right\rangle +G\underbrace{\left\Vert y_{t}-x_{t}\right\Vert }_{=\alpha_{t}\left\Vert z_{t}-z_{t-1}\right\Vert }+\frac{\beta}{2}\underbrace{\left\Vert y_{t}-x_{t}\right\Vert ^{2}}_{=\alpha_{t}^{2}\left\Vert z_{t}-z_{t-1}\right\Vert ^{2}}\\
 & =\left(1-\alpha_{t}\right)f\left(y_{t-1}\right)+\alpha_{t}f\left(x_{t}\right)+\alpha_{t}\left\langle \nabla f\left(x_{t}\right),z_{t}-x_{t}\right\rangle +G\alpha_{t}\left\Vert z_{t}-z_{t-1}\right\Vert +\frac{\beta}{2}\alpha_{t}^{2}\left\Vert z_{t}-z_{t-1}\right\Vert ^{2}
\end{align*}
 By the optimality condition for $z_{t}$,
\[
\eta_{t}\left\langle \widehat{\nabla}f(x_{t}),z_{t}-x^{*}\right\rangle \leq\left\langle \nabla_{x}\breg\left(z_{t},z_{t-1}\right),x^{*}-z_{t}\right\rangle =\breg\left(x^{*},z_{t-1}\right)-\breg\left(z_{t},z_{t-1}\right)-\breg\left(x^{*},z_{t}\right)
\]
Rearranging, we obtain
\begin{align*}
\breg\left(x^{*},z_{t}\right)-\breg\left(x^{*},z_{t-1}\right)+\breg\left(z_{t},z_{t-1}\right) & \leq\eta_{t}\left\langle \widehat{\nabla}f\left(x_{t}\right),x^{*}-z_{t}\right\rangle =\eta_{t}\left\langle \nabla f\left(x_{t}\right)+\xi_{t},x^{*}-z_{t}\right\rangle 
\end{align*}
By combining the two inequalities, we obtain
\begin{align*}
 & f\left(y_{t}\right)+\frac{\alpha_{t}}{\eta_{t}}\left(\breg\left(x^{*},z_{t}\right)-\breg\left(x^{*},z_{t-1}\right)+\breg\left(z_{t},z_{t-1}\right)\right)\\
 & \leq\left(1-\alpha_{t}\right)f\left(y_{t-1}\right)+\alpha_{t}\underbrace{\left(f\left(x_{t}\right)+\left\langle \nabla f\left(x_{t}\right),x^{*}-x_{t}\right\rangle \right)}_{\text{convexity}}\\
 & +G\alpha_{t}\left\Vert z_{t}-z_{t-1}\right\Vert +\frac{\beta}{2}\alpha_{t}^{2}\left\Vert z_{t}-z_{t-1}\right\Vert ^{2}+\alpha_{t}\left\langle \xi_{t},x^{*}-z_{t}\right\rangle \\
 & \leq\left(1-\alpha_{t}\right)f\left(y_{t-1}\right)+\alpha_{t}f\left(x^{*}\right)+G\alpha_{t}\left\Vert z_{t}-z_{t-1}\right\Vert +\frac{\beta}{2}\alpha_{t}^{2}\left\Vert z_{t}-z_{t-1}\right\Vert ^{2}+\alpha_{t}\left\langle \xi_{t},x^{*}-z_{t}\right\rangle 
\end{align*}
Subtracting $f\left(x^{*}\right)$ from both sides, rearranging, and
using that $\breg\left(z_{t},z_{t-1}\right)\geq\frac{1}{2}\left\Vert z_{t}-z_{t-1}\right\Vert ^{2}$,
we obtain
\begin{align*}
 & f\left(y_{t}\right)-f\left(x^{*}\right)+\frac{\alpha_{t}}{\eta_{t}}\left(\breg\left(x^{*},z_{t}\right)-\breg\left(x^{*},z_{t-1}\right)\right)\\
 & \leq\left(1-\alpha_{t}\right)\left(f\left(y_{t-1}\right)-f\left(x^{*}\right)\right)+\alpha_{t}\left\langle \xi_{t},x^{*}-z_{t}\right\rangle +G\alpha_{t}\left\Vert z_{t}-z_{t-1}\right\Vert -\alpha_{t}\frac{1-\beta\alpha_{t}\eta_{t}}{2\eta_{t}}\left\Vert z_{t}-z_{t-1}\right\Vert ^{2}\\
 & =\left(1-\alpha_{t}\right)\left(f\left(y_{t-1}\right)-f\left(x^{*}\right)\right)+\alpha_{t}\left\langle \xi_{t},x^{*}-z_{t-1}\right\rangle +\alpha_{t}\left\langle \xi_{t},z_{t}-z_{t-1}\right\rangle +G\alpha_{t}\left\Vert z_{t}-z_{t-1}\right\Vert -\alpha_{t}\frac{1-\beta\alpha_{t}\eta_{t}}{2\eta_{t}}\left\Vert z_{t}-z_{t-1}\right\Vert ^{2}\\
 & \le\left(1-\alpha_{t}\right)\left(f\left(y_{t-1}\right)-f\left(x^{*}\right)\right)+\alpha_{t}\left\langle \xi_{t},x^{*}-z_{t-1}\right\rangle +\alpha_{t}\left\Vert z_{t}-z_{t-1}\right\Vert \left(\left\Vert \xi_{t}\right\Vert +G\right)-\alpha_{t}\frac{1-\beta\alpha_{t}\eta_{t}}{2\eta_{t}}\left\Vert z_{t}-z_{t-1}\right\Vert ^{2}\\
 & \leq\left(1-\alpha_{t}\right)\left(f\left(y_{t-1}\right)-f\left(x^{*}\right)\right)+\alpha_{t}\left\langle \xi_{t},x^{*}-z_{t-1}\right\rangle +\frac{\alpha_{t}\eta_{t}}{2\left(1-\beta\alpha_{t}\eta_{t}\right)}\left(\left\Vert \xi_{t}\right\Vert +G\right)^{2}
\end{align*}
Finally, we divide by $\frac{\alpha_{t}}{\eta_{t}}$, and obtain
\begin{align*}
 & \frac{\eta_{t}}{\alpha_{t}}\left(f\left(y_{t}\right)-f\left(x^{*}\right)\right)+\breg\left(x^{*},z_{t}\right)-\breg\left(x^{*},z_{t-1}\right)\\
 & \leq\frac{\eta_{t}}{\alpha_{t}}\left(1-\alpha_{t}\right)\left(f\left(y_{t-1}\right)-f\left(x^{*}\right)\right)+\eta_{t}\left\langle \xi_{t},x^{*}-z_{t-1}\right\rangle +\frac{\eta_{t}^{2}}{2\left(1-\beta\alpha_{t}\eta_{t}\right)}\left(\left\Vert \xi_{t}\right\Vert +G\right)^{2}\\
 & \leq\frac{\eta_{t}}{\alpha_{t}}\left(1-\alpha_{t}\right)\left(f\left(y_{t-1}\right)-f\left(x^{*}\right)\right)+\eta_{t}\left\langle \xi_{t},x^{*}-z_{t-1}\right\rangle +\frac{\eta_{t}^{2}}{1-\beta\alpha_{t}\eta_{t}}\left(\left\Vert \xi_{t}\right\Vert ^{2}+G^{2}\right)
\end{align*}
\end{proof}

\end{document}